\numberwithin{equation}{section}
\newtheorem{thm}{Theorem}[section]
\newtheorem{cor}[thm]{Corollary}
\newtheorem{lemma}[thm]{Lemma}
\newtheorem{prop}[thm]{Proposition}
\newtheorem{remark}[thm]{Remark}
\DeclareMathOperator{\sn}{sn}
\begin{document}
\title{\Large\bf On periodic traveling wave solutions with or without phase transition to the Navier-Stokes-Korteweg and the Euler-Korteweg equations}

\author{
Yoshikazu Giga$^1$\footnote{Corresponding author. E-mail: labgiga@ms.u-tokyo.ac.jp}, 
 Takahito Kashiwabara$^2$\footnote{E-mail: tkashiwa@ms.u-tokyo.ac.jp}  
 and Haruki Takemura$^3$\footnote{E-mail: takemura@ms.u-tokyo.ac.jp} 
}

\date{}

\maketitle
\noindent\textbf{Abstract.} The Navier-Stokes-Korteweg and the Euler-Korteweg equations are considered in isothermal setting. 
 These are diffuse interface models of two-phase flow.
 For the Navier-Stokes-Korteweg equations, we show that there is no periodic traveling wave solution with phase transition although there exists a non-constant periodic traveling wave solution with no phase transition.
 For the Euler-Korteweg equations, we show that there always exists a periodic traveling wave solution with phase transition for any period if the Korteweg relaxation parameter is small compared with the period provided that the available energy is double-well type.
 We also show that such a periodic traveling wave solution tends to a monotone traveling wave solution as the period tends to infinity under suitable spatial translation.
 Our numerical experiment suggests that there is periodic traveling wave with phase transition which is stable under periodic perturbation for small viscosity but it seems that this is a transition pattern.
\vskip0.2cm
\noindent\textbf{Keywords.}\quad Navier-Stokes-Korteweg equations, Euler-Korteweg equations, phase transition, traveling wave
\vskip0.2cm
\noindent\textbf{AMS subject classifications 2020.}\quad 35C07, 35Q35, 76T10, 82C26

\section{Introduction} \label{SIn} 

The Navier-Stokes-Korteweg equations are a typical diffuse interface model to describe two phase flow consisting vapor and liquid.
  It was proposed by Korteweg \cite{Ko} based on the idea of van der Waals \cite{VW} who introduced an energy describing coexisting two phases.
 Dunn and Serrin \cite{DS} derived a modern form of the equations from fundamental principles of thermomechanics by introducing interstitial work.
 Heida and M\'alek \cite{HM} derived such models by entropy production method.
 For further derivations of models the reader is refered to \cite{FK}, \cite{MP} and references therein.

For diffuse interface model, it is important to check what kind of sharp interface model is approximated.
 We consider its isothermal model.
 We calculate its evolution for one-dimensional model with periodic boundary condition numerically.
 According to our numerical experiments, it looks there exists rather stable periodic traveling wave with phase transition in the sense that the velocity is not spatially constant.
 However, as we show later, there exists no such a solution for the Navier-Stokes-Korteweg equations.
 We observe that our numerical solution provides a transit pattern and it is close a periodic traveling wave solution to the Euler-Korteweg equations, i.e., the equations with no viscosity.

The purpose of this paper has three folds.
 We first give a simple and rigorous proof that the Navier-Stokes-Korteweg equations admit no periodic traveling wave solution with phase transition.
 This is done just by integration by parts.
 We next show that for a given period there exists a periodic traveling
wave solution with phase transition for the Euler-Korteweg equations provided that the Korteweg relaxation parameter and momentum are small.
 For the Navier-Stokes-Korteweg equations there always exists a periodic traveling wave solution with no phase transition but non-uniform density for small Korteweg relaxation parameter.
 We prove this fact by recalling a singular limit of the van der Waals' type energy with a double-well potential.
 We also prove that under suitable spatial translation our periodic traveling wave solution tends to a monotone traveling wave solution as the period tends to infinity.
 This is proved by a simple phase plane analysis.
 Finally, we compare with our numerical result.
 
We performed numerical computations for the initial value problem of the Navier-Stokes-Korteweg equations under periodic boundary conditions. Equilibrium states resembling traveling wave solutions with phase transitions were observed in the results. Furthermore, we confirmed that their profiles closely match the exact traveling wave solutions of the Euler-Korteweg equations. For the computations, we employed  a straightforward combination of the cubic interpolated pseudo-particle (CIP) method \cite{KaTa} with an explicit finite difference scheme. 

It is not difficult to find a monotone traveling wave solution with no periodic structure for the Euler-Korteweg equations.
 It corresponds to a heteroclinic orbit of the corresponding ordinary differential equations (ODE).
 Its existence can be proved by a phase plane analysis based on Hamiltonian structure.
 It is in a framework of general theory in \cite[pp.\ 11--12]{B} as pointed out by \cite{BDDJ}.
 It may be possible to apply this type of approach to find a periodic traveling wave but we rather apply a well-developed variational approach since we are especially interested in the case where the Korteweg relaxation parameter is small.
 The theory of singular limit of double-well potential with Dirichlet energy type relaxation is by now well developed.
 It was started by L.\ Modica \cite{MM1}, \cite{MM2} for one-dimensional setting.
 Later, it was extended to multi-dimensional setting by \cite{St}, \cite{KS}, \cite{Mo}.

The sharp interface limit is an interesting topic.
 It depends on scaling.
 For example, in \cite{ADKK}, under a conditional assumption it is proved that the solution of the (isothermal) Navier-Stokes-Korteweg equations converges to a weak solution of two-phase flow of incompressible Navier-Stokes equations with no phase transition provided that the square of the Mach number and the Korteweg relaxation parameter tends to zero with the same speed.
 The conditional assumption in \cite{ADKK} is that the associated energy functional converges to the interface energy as the above parameter tends to zero.
In \cite{DGKR}, a sharp interface limit result with a different scaling, where the Mach number remains non-zero, is presented based on the method of matched asymptotic expansions.
 Long time ago, to describe (constant-speed) motion of phase boundaries in two phase fluids, Slemrod \cite{Sl} proposed to use traveling wave solutions of a version of the Navier-Stokes-Korteweg equations.
 Later, Freist\"uhler \cite{F} explained motion of phase boundaries (in mixed fluids) with or without phase transition by using traveling wave solutions to the Navier-Stokes-Cahn-Hilliard equations.

For initial-value problem for the (isothermal) Navier-Stokes-Korteweg equations by now there are many well-posedness results both for strong and weak solutions.
 It was started by Hattori and Li \cite{HL1}, \cite{HL2}, who proved the unique existence of a global solutions with small initial data by assuming the pressure is strictly monotone which avoid phase transition.
 A similar result has been proved in \cite{DD} in the Besov space setting.
 Also, one is able to construct a solution by maximum regularity theory \cite{Kot}, \cite{MSh}.
 These results are for construction of a strong solution, which is a smooth solution if the data is smooth.
 There are also various results to construct a weak solution for a general data.
 See, for example, \cite{BDL}, \cite{Ha}.
 However, research on the case of non-monotone pressure is so far quite limited both for strong solutions and weak solutions.
 A strong solution is constructed by \cite{KT} near the density of zero sound speed, that is, a critical point of the pressure.
 It also discusses its stability.
 Stability of constant states has been discussed in various papers for monotone pressure see e.g.\ \cite{TZ}.
 At this moment, we do not know the stability of periodically traveling wave solution.

For the Euler-Korteweg equations, global well-posedness has been established for one-dimensional setting including non-monotone pressure \cite{BDD1}.
 The local well-posedness is established and blow-up criteria are given in \cite{BDD2}.
 These results are for strong solutions.
 A weak global-in-time solution is given by \cite{AM1}, \cite{AM2} but for the Korteweg parameter $\varepsilon=\varepsilon(\rho)$ is taken as $\mathrm{const}/\rho$, the case of quantum fluids.
 The stability of constant states with zero sound speed has been established by \cite{Son} in multi-dimensional setting.
 In one-dimensional setting of homoclinic orbits called soliton has been established in \cite{BDDJ} based on Hamiltonian structure.
 The stability of heteroclinic orbits called kink is also discussed in \cite{BDDJ} under some conditional assumptions.
 It is not clear that their result apply to our periodically traveling waves.

This paper is organized as follows.
 In Section \ref{SNP}, we prove non-existence of a periodic traveling wave solution with phase transition for the Navier-Stokes-Korteweg equations.
 In Section \ref{SE}, we prove the existence of a periodic traveling wave solution for the Euler-Korteweg equations based on variatonal method.
 In Section \ref{SMT}, we prove that a periodic traveling wave solution tends to a monotone traveling wave solution as the period tends to infinity.
 In Section \ref{SNE}, we compare with numerical results.
 In Section \ref{SApp}, we give our numerical scheme.

\section{No phase transition for periodic traveling wave solutions for viscous case} \label{SNP} 

The isothermal Navier-Stokes-Korteweg equations are of the form
\begin{empheq}[left={\empheqlbrace}]{alignat=2}
	&\partial_t \rho + \operatorname{div} m = 0 
	\quad\text{in}\quad \mathbb{R}^n \times (0,\infty), \label{EM} \\
	&\partial_t m + \operatorname{div} \left(m\otimes m/\rho\right) - \operatorname{div} S + \nabla p = \rho\nabla\varepsilon\Delta\rho
	\quad\text{in}\quad \mathbb{R}^n \times (0,\infty). \label{EMME}
\end{empheq}
Here $\rho$ denotes the density and $m$ denotes the momentum so that $u=m/\rho$ is the velocity.
 The parameter $\varepsilon=\varepsilon(\rho)$ denotes the Korteweg relaxation parameter.
 It is assumed to be positive.
 The symbol $S$ denotes the viscous stress tensor while $p=p(\rho)$ denotes the pressure.
 If $S\equiv0$, we call these equations the Euler-Korteweg equations.
 We consider the Newtonian fluid, i.e., we assume that $S$ is of the form
\[
	S = \mu (\nabla u + \nabla u^T)
	+ \lambda (\operatorname{div} u) I,
\]
where $I$ denotes the identity.
 The shear viscosity $\mu=\mu(\rho)$ and the bulk viscosity $\lambda=\lambda(\rho)$ is assumed to satisfy
\[
	\bar{\mu} = n\lambda + 2\mu > 0
\]
so that the problem for $u$ 
is parabolic.
 Physically speaking, the condition $\bar{\mu}\ge0$ is necessary so that the system fulfills the second law of thermodynamics.
 Although we assume that the pressure $p=p(\rho)$ is locally Lipschitz for $\rho>0$, we do not assume the monotonicity of $p$ with respect to $\rho$ so that it allows two phases and phase transition.
 We also assume that $\varepsilon(\rho)$ is continuous for $\rho\ge0$.

We say that $(\rho,m)$ is periodic if there is a period $T>0$ such that
\[
	\left( \rho(x,t), m(x,t) \right)
	= \left( \rho(x,t+T), m(x,t+T) \right). 
\]
We say that a solution $(\rho,m)$ to \eqref{EM}--\eqref{EMME} is a traveling wave solution with speed $c$ if there is $c\in\mathbb{R}^n$ and profile functions $\bar{\rho}$, $\bar{m}$ such that 
\[
	\left( \rho(x,t), m(x,t) \right)
	= \left( \bar{\rho}(x-ct), \bar{m}(x-ct) \right). 
\]
If we look at the problem in a moving frame with speed $c$, a periodic traveling wave solution with period $T$ and speed $c$ is nothing but a stationary solution with periodic boundary condition with period $cT$.
 This procedure is done by considering the Galilean transformation $\tilde{x}=x-ct$, $\tilde{u}=u-c$.

We consider the problem in one-dimensional setting.
 The equation \eqref{EM}--\eqref{EMME} is reduced to
\begin{empheq}[left={\empheqlbrace}]{alignat=2}
	&\partial_t \rho + \partial_x m = 0 
	&\quad\text{in}\quad \mathbb{R} \times (0,\infty) \label{EM1} \\
	&\partial_t m + \partial_x (m^2/\rho) + \partial_x p
	- \partial_x(\bar{\mu}\partial_x u) = \rho \partial_x(\varepsilon \partial_x^2 \rho)
	&\quad\text{in}\quad \mathbb{R} \times (0,\infty) \label{EMME1}
\end{empheq}
with $\bar{\mu}=\lambda+2\mu>0$.
 Here $m$ and $u$ are now scalar functions.
 A stationary solution to \eqref{EM1}--\eqref{EMME1} must solve
\begin{empheq}[left={\empheqlbrace}]{alignat=2}
	&\partial_x m = 0 
	&\quad\text{in}\quad \mathbb{R}, \label{EMS} \\
	&\partial_x (m^2/\rho) + \partial_x p
	- \partial_x(\bar{\mu}\partial_x u) = \rho \partial_x(\varepsilon \partial_x^2 \rho)
	&\quad\text{in}\quad \mathbb{R}. \label{EMMS}
\end{empheq}
The first equation \eqref{EMS} says that $m$ is a constant.
\begin{thm} \label{TNon}
Assume that $\bar{\mu}(\rho)>0$ and $\varepsilon(\rho)>0$ for $\rho>0$.
 Let $(\rho,m)$ be a $C^2$ solution to \eqref{EMS}--\eqref{EMMS} with periodic boundary condition and $\rho\ge0$.
 Then $\rho$ must be a constant provided that $m\neq0$.
\end{thm}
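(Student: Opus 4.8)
My plan is to derive an energy identity by testing the momentum balance \eqref{EMMS} against the velocity $u=m/\rho$ and integrating over one spatial period, which I will denote $L>0$. Before doing so I would record two preliminary facts. First, \eqref{EMS} already gives that $m$ is a constant, and it is nonzero by assumption. Second, $\rho$ must in fact be strictly positive everywhere: since $\rho\ge0$ is continuous and periodic and $m=\rho u$ is a nonzero constant, a point where $\rho$ vanished would force $m=0$ there, a contradiction; hence $\min_{[0,L]}\rho>0$, so that $u=m/\rho$ is a well-defined $C^2$ function and $\bar\mu(\rho)>0$ throughout.

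The heart of the argument is the identity obtained by multiplying \eqref{EMMS} by $u$ and integrating over $[0,L]$. I expect every term except the viscous one to drop out after integration by parts, using periodicity to discard all boundary contributions. For the convective term, since $m$ is constant one has $m^2/\rho=mu$, so $\int_0^L u\,\partial_x(m^2/\rho)\dx=m\int_0^L u\,\partial_x u\dx=\tfrac{m}{2}\int_0^L\partial_x(u^2)\dx=0$. For the pressure term, integration by parts gives $\int_0^L u\,\partial_x p\dx=-\int_0^L p\,\partial_x u\dx$, and since $p\,\partial_x u=-m\,p(\rho)\rho^{-2}\partial_x\rho=-m\,\partial_x Q(\rho)$ for a primitive $Q$ with $Q'(s)=p(s)/s^2$ (well defined because $p$ is locally Lipschitz and $\rho$ stays in a compact subset of $(0,\infty)$), this also integrates to zero. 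The decisive point is the capillarity term on the right-hand side: because $u\rho=m$ is constant, $\int_0^L u\,\rho\,\partial_x(\varepsilon\partial_x^2\rho)\dx=m\int_0^L\partial_x(\varepsilon\partial_x^2\rho)\dx=0$ by periodicity, so no structural identity for the Korteweg term is needed at all.

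What survives is the viscous term, which after one integration by parts becomes the dissipation: $-\int_0^L u\,\partial_x(\bar\mu\,\partial_x u)\dx=\int_0^L\bar\mu\,(\partial_x u)^2\dx$. The identity therefore collapses to $\int_0^L\bar\mu(\rho)\,(\partial_x u)^2\dx=0$. Since $\bar\mu(\rho)>0$, this forces $\partial_x u\equiv0$, so $u$ is a constant; as $m=\rho u\neq0$ this constant is nonzero, and then $\rho=m/u$ is constant, which is the claim.

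I anticipate that the only genuinely delicate point is the preliminary positivity of $\rho$: without $m\neq0$ one cannot exclude $\rho$ touching zero, and indeed the conclusion is false for $m=0$, where nonconstant periodic profiles exist. The remainder is bookkeeping, and the one real idea is the choice of multiplier $u$ (rather than $m$ or $\rho$), which is exactly what turns the convective, pressure, and capillarity terms into exact derivatives, leaving only the manifestly nonnegative viscous dissipation to be set to zero.
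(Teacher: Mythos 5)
Your proof is correct and coincides in essence with the paper's own argument: the paper multiplies \eqref{EMMS} by the specific volume $v=1/\rho$, which is your multiplier $u=mv$ up to the nonzero constant factor $m$, and likewise integrates over one period so that every term is an exact derivative except the viscous dissipation $\int_0^\omega \bar{\mu}\,|\partial_x v|^2\,dx$, which must therefore vanish. The only cosmetic differences are your handling of the pressure term via the primitive $Q$ (the paper uses $\partial_x p=\rho\,\partial_x \Psi'(\rho)$ to the same effect) and your explicit remark on the strict positivity of $\rho$, which the paper leaves implicit.
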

\begin{proof}
It is convenient to introduce the mass specific volume $v=1/\rho$ and the mass specific available energy $\psi(\rho)$ which is defined by $\psi(\rho)=\tilde{\psi}(1/\rho)$ with $\partial_v\tilde{\psi}(v)=-\tilde{p}(v)$, where $\tilde{p}(v)=p(1/v)$.
 (Note that $\psi$ is determined up to constant from $p$.) 
 If we introduce the volume specific available energy $\Psi$ by $\Psi(\rho)=\rho\psi(\rho)$, then
\begin{equation} \label{EPr}
	p(\rho) = \rho^2 \partial_\rho \psi(\rho) = \rho \Psi'(\rho) - \Psi(\rho),
\end{equation}
where $\Psi'(\rho)=\partial_\rho\Psi(\rho)$.
 A direct calculation shows that $\partial_\rho p(\rho)=\Psi''(\rho)\rho$ so that
\[
	\nabla p \left(\rho(x) \right)
	= (\partial_\rho p) \nabla \rho
	= \rho \nabla \Psi'(\rho).
\]

The equation \eqref{EMMS} becomes
\[
	\partial_x (m^2 v) + \frac1v \partial_x \Psi'(\rho)
	- \partial_x \left( \bar{\mu} \partial_x (mv) \right)
	= \frac1v \partial_x (\varepsilon \partial_x^2 \rho).
\]
Since $m$ is a constant, multiplying both sides with $v$ yields
\begin{equation} \label{EKS}
	\partial_x \left( \frac{m^2 v^2}{2} \right)
	+ \partial_x \Psi'(\rho)
	- v\partial_x \left( \bar{\mu} \partial_x(mv) \right)
	= \partial_x (\varepsilon \partial_x^2 \rho).
\end{equation}
Here we invoke $v\partial_x v=(\partial_x v^2)/2$.
 We next notice that
\[
	v\partial_x (\bar{\mu} \partial_x v)
	= \partial_x (v \bar{\mu} \partial_x v) - \bar{\mu}\partial_x v \partial_x v.
\]
Plugging this identity to \eqref{EKS}, we obtain
\begin{equation} \label{ESE}
	\partial_x \left\{ \frac{m^2 v^2}{2} + \Psi'(\rho) 
	- v \bar{\mu} \partial_x v - \varepsilon \partial_x^2 \rho \right\}
	+ m \bar{\mu} |\partial_x v|^2 = 0.
\end{equation}
Let $\omega>0$ be a period.
 Integrating \eqref{ESE} over $(0,\omega)$ and using the periodicity, we end up with
\[
	m \int_0^\omega \bar{\mu} |\partial_x v|^2 \;dx = 0.
\]
By our assumption on viscosity, this implies $\partial_xv=0$, i.e., $\rho$ is constant provided that $m\neq0$.
\end{proof}

Let us go back to a periodic traveling wave.
 It is convenient to introduce mass flux
\[
	j = \rho u - \rho c = m - \rho c
\]
in one-dimensional setting.
 The same quantity is defined on the interface in sharp interface model; multi-dimensional setting we have to take its normal component with respect to the interface \cite{PS}.
 We say that there exists no phase transition for a traveling wave solution if $j\equiv0$.
 Otherwise, we say that there exists phase transition.
 In other words, $u=c$ if and only if there is no phase transition.
 The velocity of the fluids must agree with velocity of the traveling wave,
 In the moving frame, this is equivalent to saying that $m=0$.
 Thus, Theorem \ref{TNon} easily deduces
\begin{cor} \label{CNon}
Assume that $\bar{\mu}(\rho)>0$ and $\varepsilon(\rho)\ge0$ for $\rho>0$.
 Then there exists no phase transition for periodic traveling wave solution to \eqref{EM1}--\eqref{EMME1}.
\end{cor}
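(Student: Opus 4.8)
The plan is to read off the statement directly from Theorem \ref{TNon} by passing to the moving frame, exactly as prepared in the discussion preceding the corollary. First I would take a periodic traveling wave solution $(\rho,m)$ of \eqref{EM1}--\eqref{EMME1} with speed $c$ and period $T$, and apply the Galilean transformation $\tilde{x}=x-ct$, $\tilde{u}=u-c$. Since the system is Galilean invariant, the wave becomes a stationary, $\tilde{x}$-periodic solution in the new frame, with spatial period $\omega=cT$ when $c\neq0$ (and if $c=0$ the wave is already stationary, so Theorem \ref{TNon} applies at once with $j=m$). The decisive observation is that the momentum in the moving frame is precisely the mass flux, namely $\tilde{m}=\rho\tilde{u}=\rho u-\rho c=m-\rho c=j$.

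Next I would record that, in the moving frame, the pair $(\rho,j)$ solves the stationary system \eqref{EMS}--\eqref{EMMS} with $m$ replaced by $\tilde m=j$; here $p$, $\bar\mu$ and $\varepsilon$ are unchanged because they depend on $\rho$ alone and $\rho$ is unaffected by the transformation. Equation \eqref{EMS} then forces $j$ to be constant, and Theorem \ref{TNon} applies verbatim: if $j\neq0$, then $\rho$ must be constant. By definition there is no phase transition exactly when $j\equiv0$, so the only remaining alternative is that $j$ is a nonzero constant; but the previous line shows that this forces $\rho$ to be constant, i.e.\ a single-phase uniform state carrying no genuine phase transition. Hence a non-trivial periodic traveling wave can only have $j\equiv0$, which is the assertion.

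Two points deserve care. The first is the mild relaxation of hypothesis: the corollary assumes only $\varepsilon(\rho)\ge0$, whereas Theorem \ref{TNon} is stated for $\varepsilon(\rho)>0$. This causes no difficulty, since in the proof of Theorem \ref{TNon} the Korteweg term $\varepsilon\partial_x^2\rho$ sits inside the total $x$-derivative in \eqref{ESE} and vanishes upon integrating over a period; only $\bar\mu>0$ and $j\neq0$ are actually used, so the argument runs unchanged for $\varepsilon(\rho)\ge0$. The second, and the real crux rather than any computation, is interpretational: Theorem \ref{TNon} does not literally yield $j\equiv0$ but only the implication $j\neq0\Rightarrow\rho\equiv\mathrm{const}$. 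The main obstacle is therefore to recognize that a constant-density profile is not a phase-transition wave, so that excluding non-constant $\rho$ with $j\neq0$ is exactly what ``no phase transition'' means for a genuine periodic traveling wave.
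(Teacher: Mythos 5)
Your proof is correct and follows essentially the same route as the paper: pass to the moving frame via the Galilean transformation, identify the moving-frame momentum with the mass flux $j$, and apply Theorem \ref{TNon}. Your two supplementary remarks---that the sign of $\varepsilon$ is never used because the Korteweg term sits inside the total $x$-derivative in \eqref{ESE} and vanishes upon integration over a period, and that a constant-density state carries no genuine phase transition---are precisely the points the paper leaves implicit in its ``easily deduces'' step.
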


\section{Existence of non-trivial periodic traveling wave solutions} \label{SE} 

We are interested in the case when the pressure $p$ is not necessarily increasing in $\rho$.
 If we introduce the mass specific available energy $\Psi$, then $p$ is of the form \eqref{EPr}.
 We are particularly interested in the case when $\Psi$ has both convex and concave parts.
 More precisely, we postulate that $\Psi=\Psi(\rho)$ is $C^2$ for $\rho>0$ and there is two points $\rho_\ell>\rho_g>0$ such that the graph of $\Psi$ has a bitangent line through $\bigl(\rho_g,\Psi(\rho_g)\bigr)$ and $\bigl(\rho_\ell,\Psi(\rho_\ell)\bigr)$ and it is situated above this bitangent line at least near $[\rho_g,\rho_\ell]$ except at $\rho_g$ and $\rho_\ell$; see Figure \ref{FBt}.
\begin{figure}[htp]
\centering
\includegraphics[width=6cm]{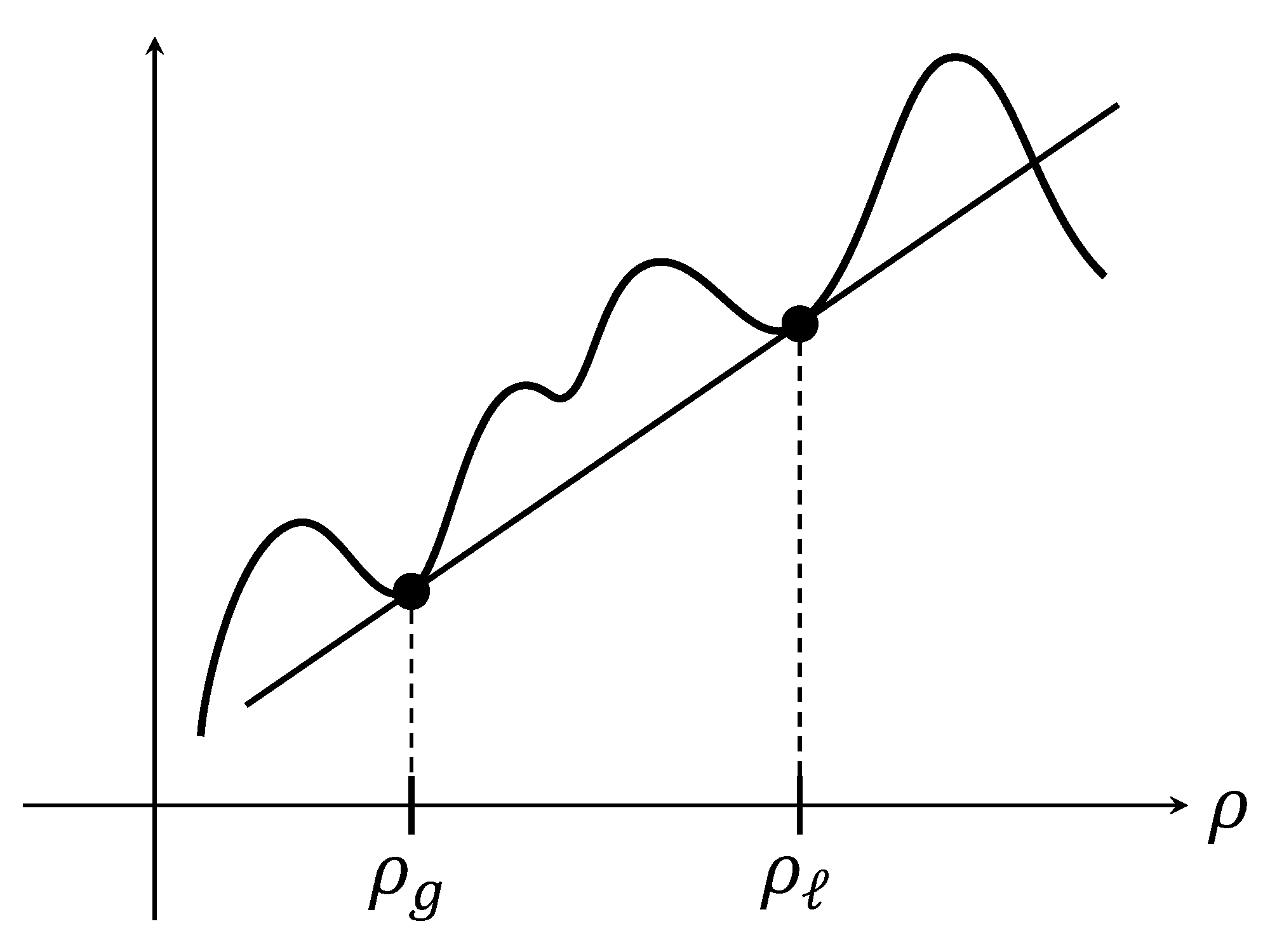}
\caption{graph of $\Psi$} \label{FBt}
\end{figure}

To include the case when the constant momentum $m\neq0$ for the Euler-Korteweg equations, it is 
convenient to introduce a modified energy
\[
	\Psi^m(\rho) = \Psi(\rho) - \frac{m^2}{2\rho}
\]
as in \cite{GG}.
 By our assumption for $\Psi$, if $|m|$ is sufficiently small then $\Psi^m$ has the same property by replacing $\{\rho_\ell,\rho_g\}$ by different values $\{\rho_\ell^m,\rho_g^m\}$ close to $\{\rho_\ell,\rho_g\}=\{\rho_\ell^0,\rho_g^0\}$.
 In the sequel, we postulate that $m$ is taken so that this property holds.

Our goal in this section is to prove the existence of non-trivial periodic solution to \eqref{EMS}--\eqref{EMMS} including the case $m\neq0$.
\begin{thm} \label{TEx}
Assume that $\varepsilon(\rho)=\varepsilon$ is a positive constant and $\bar{\mu}=0$.
 Let $\rho_*$ be in the interval $(\rho_g^m,\rho_\ell^m)$.
 Then, there exists a (small) constant $\varepsilon_0>0$ such that for any $\omega>0$ satisfying $\varepsilon/\omega^2<\varepsilon_0$ there exists a non-constant periodic solution $\rho_\varepsilon$ to \eqref{EMS}--\eqref{EMMS} with period $\omega$ satisfying
\begin{equation} \label{EAv}
	\frac1\omega \int_0^\omega \rho\; dx = \rho_*.
\end{equation}
Moreover, the value of $\rho_\varepsilon$ is in $[\rho_\ell^m,\rho_g^m]$.
 Furthermore, one can take $\rho_\varepsilon$ so that it converges (as $\varepsilon\downarrow0$) to a function having exactly two jumps with values in $\{\rho_g^m,\rho_\ell^m\}$.
 In the case $m\neq0$, the velocity $u_\varepsilon=m/\rho_\varepsilon$ is not a constant function. 
 Even if $\bar{\mu}>0$, such a solution $\rho_\varepsilon$ exists if $m=0$ with constant $u_\varepsilon$.
\end{thm}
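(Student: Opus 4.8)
The plan is to recast \eqref{EMMS} as a constrained minimization and invoke the Modica--Mortola singular-limit theory. First I integrate the stationary equation: putting $\bar{\mu}=0$ and $\varepsilon$ constant in the identity \eqref{ESE} gives $\partial_x\{m^2v^2/2+\Psi'(\rho)-\varepsilon\partial_x^2\rho\}=0$, and since $(\Psi^m)'(\rho)=\Psi'(\rho)+m^2/(2\rho^2)$ this integrates once to
\[
	\varepsilon\,\partial_x^2\rho=(\Psi^m)'(\rho)-C
\]
for some constant $C\in\R$. This is precisely the Euler--Lagrange equation, with $C$ the Lagrange multiplier of the mass constraint \eqref{EAv}, of
\[
	E_\varepsilon[\rho]=\int_0^\omega\Big(\tfrac{\varepsilon}{2}|\partial_x\rho|^2+\Psi^m(\rho)\Big)\dx .
\]
So it is enough to produce a non-constant constrained critical point of $E_\varepsilon$ with values in $[\rho_g^m,\rho_\ell^m]$.

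Next I would rescale to a fixed domain. With $y=x/\omega$ and $\tilde\varepsilon=\varepsilon/\omega^2$, minimizing $E_\varepsilon$ is equivalent (up to the harmless factor $\omega$) to minimizing
\[
	F_{\tilde\varepsilon}[\rho]=\int_0^1\Big(\tfrac{\tilde\varepsilon}{2}|\rho'|^2+W(\rho)\Big)dy
\]
over $1$-periodic $\rho$ with $\rho_g^m\le\rho\le\rho_\ell^m$ and mean $\rho_*$, where $W$ is $\Psi^m$ minus its bitangent line; subtracting this affine function shifts $E_\varepsilon$ only by a constant under the mass constraint, and by the bitangent hypothesis $W\ge0$, vanishing exactly at the wells $\rho_g^m,\rho_\ell^m$. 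A minimizer exists by the direct method: the admissible set is bounded and closed in $H^1(0,1)$, $W$ is continuous and nonnegative, and the gradient term is weakly lower semicontinuous. Since the values lie in $[\rho_g^m,\rho_\ell^m]$, where $\Psi^m$ is $C^2$ and $\rho_g^m>0$, elliptic bootstrapping upgrades any critical point to a positive $C^2$ (indeed smooth) solution.

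The heart of the matter is that the minimizer is not the constant $\rho_*$, and this is where the smallness $\tilde\varepsilon<\varepsilon_0$ enters. The constant competitor has energy $F_{\tilde\varepsilon}[\rho_*]=W(\rho_*)>0$, independent of $\tilde\varepsilon$, whereas a one-bubble competitor --- a plateau near $\rho_\ell^m$ and a plateau near $\rho_g^m$, with lengths fixed by the mean constraint, joined by two optimal transition layers --- costs only $2\sqrt{\tilde\varepsilon}\int_{\rho_g^m}^{\rho_\ell^m}\sqrt{2W(s)}\,ds=O(\sqrt{\tilde\varepsilon})$. Hence there is $\varepsilon_0>0$ such that for $\tilde\varepsilon=\varepsilon/\omega^2<\varepsilon_0$ the competitor beats the constant and the minimizer is non-constant. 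A phase-plane analysis of $\tilde\varepsilon\rho''=W'(\rho)+\mathrm{const}$ then shows the minimizing orbit oscillates strictly between two turning points inside $(\rho_g^m,\rho_\ell^m)$, so the obstacle is inactive and $\rho$ solves the free Euler--Lagrange equation, i.e.\ \eqref{EMMS}; undoing the scaling yields $\rho_\varepsilon$ of period $\omega$ with the asserted range.

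Finally, the behaviour as $\varepsilon\downarrow0$ is read off from the $\Gamma$-convergence of $F_{\tilde\varepsilon}$ to the constrained perimeter functional: the minimizers are $L^1$-precompact and converge to a minimizer of the limit, a two-valued $BV$ function on the circle with values in $\{\rho_g^m,\rho_\ell^m\}$ and mean $\rho_*\in(\rho_g^m,\rho_\ell^m)$. On the circle the number of jumps is even, so the minimal-perimeter configuration has exactly two, which gives the stated limit. The last two assertions are then immediate: if $m\neq0$ then $u_\varepsilon=m/\rho_\varepsilon$ inherits the non-constancy of $\rho_\varepsilon$; and if $m=0$ then $u_\varepsilon\equiv0$, so the viscous term $\partial_x(\bar\mu\partial_x u)$ vanishes identically and the same $\rho_\varepsilon$ solves \eqref{EMMS} even when $\bar\mu>0$. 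I expect the main obstacle to be this non-constancy step --- obtaining the clean energy gap that pins down $\varepsilon_0$ --- together with extracting \emph{exactly} two jumps in the periodic limit; the remainder is the standard direct-method and $\Gamma$-convergence machinery.
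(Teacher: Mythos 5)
Your overall route---recast \eqref{EMMS} as the Euler--Lagrange equation of a constrained Modica--Mortola functional, rescale to a unit period, produce a non-constant constrained minimizer, and read off the $\varepsilon\downarrow0$ structure from $\Gamma$-convergence---is the same as the paper's (Lemma \ref{LMM} built on Proposition \ref{PMM}). Your non-constancy argument is actually a nice improvement in transparency: comparing the constant competitor, of energy $W(\rho_*)>0$ independent of $\tilde\varepsilon$, with a two-layer competitor of energy $\approx 2\sqrt{\tilde\varepsilon}\int_{\rho_g^m}^{\rho_\ell^m}\sqrt{2W}$, gives an explicit $\varepsilon_0$ depending only on $W$ and $\rho_*$ (hence independent of $\omega$), whereas the paper deduces non-constancy more softly from $\min\mathcal{E}_\varepsilon\to\min\mathcal{E}_0$ and convergence of minimizers.

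The genuine gap is the step ``the obstacle is inactive.'' You minimize over $\{\rho_g^m\le\rho\le\rho_\ell^m\}$ and then invoke a phase-plane analysis of $\tilde\varepsilon\rho''=W'(\rho)+\mathrm{const}$ to conclude the minimizer stays strictly inside $(\rho_g^m,\rho_\ell^m)$. But a minimizer of the obstacle problem satisfies that ODE only on the non-contact set; on the contact set it satisfies merely a variational inequality, so the phase-plane picture is unavailable exactly where you need it---as stated, the argument is circular. It can be repaired, but this is real work, not a remark: (i) obstacle-problem regularity gives $\rho\in C^1$, hence $\rho'=0$ at every contact point; (ii) on each non-contact arc abutting a point where $\rho=\rho_\ell^m$, Hamiltonian conservation gives
\begin{equation}
	\frac{\tilde\varepsilon}{2}|\rho'|^2 = W(\rho)+C\,(\rho_\ell^m-\rho),
\end{equation}
with $C$ the mass multiplier; (iii) a case analysis on $C$ then rules out contact: if $C<0$ the right-hand side is negative just inside the arc (impossible); if $C>0$ it is strictly positive, so the arc is strictly monotone and must run into the \emph{lower} obstacle with non-zero slope, contradicting (i); if $C=0$ then, since $W\in C^2$ with $W=W'=0$ at the well, $|\rho'|\le K(\rho_\ell^m-\rho)$ and Gronwall shows the obstacle is never attained in finite length. (The lower obstacle is symmetric.) Note that the paper sidesteps the obstacle altogether: it extends $W$ coercively outside the wells, minimizes \emph{freely} so the Euler--Lagrange equation holds everywhere, and recovers the range bound afterwards by truncation---though that truncation step quietly has its own subtlety, since truncation perturbs the mean constraint, and the cleanest repair of it is essentially the argument (i)--(iii) above. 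Either way, this step needs to be written out; everything else in your proposal (direct method, exactly-two-jumps structure of the periodic $\Gamma$-limit minimizer, the $m\neq0$ and $m=0$, $\bar\mu>0$ remarks) is sound and parallels the paper.
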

If we use $\Psi^m$, the equation \eqref{EMMS} (or \eqref{ESE}) can be written as
\begin{equation} \label{EG}
	\partial_x (\partial_\rho \Psi^m - \varepsilon \partial_x^2 \rho) = 0
	\quad\text{in}\quad \mathbb{T} = \mathbb{R}/(\omega\mathbb{Z}),
\end{equation}
when $\bar{\mu}=0$.
 In the case $\bar{\mu}$ is positive, we see $m=0$ by Theorem \ref{TNon} for the existence of non-trivial solution.

To prove Theorem \ref{TEx}, it suffices to find a non-constant solution of \eqref{EG} satisfying \eqref{EAv}.
 Since the problem is one dimensional, it may be possible to find such a solution by phase plane analysis for ODEs.
 However, in this paper we rather take a variational approach especially recalling the sharp interface limit of what is called the Allen-Cahn or the van der Waals' energy.
 Although this approach is by now standard, we give here at least outline of the proof for the reader's convenience and completeness.

We begin with recalling a well-known $\Gamma$-convergence result originally due to Modica and Mortola \cite{MM1}, \cite{MM2}.
 We consider a double-well potential $W$ satisfying
\begin{enumerate}
\item[(W1)] $W\in C(\mathbb{R})$ and $W\ge0$,
\item[(W2)] $W(\tau)=0 \Leftrightarrow \tau=\pm1$,
\item[(W3)] $\liminf_{|\tau|\to\infty} W(\tau)>0$.
\end{enumerate}
We next consider a relaxed energy
\[
	E_\varepsilon(w) = \int_0^\omega \left\{ \frac12 \varepsilon \left| \frac{dw}{dx} \right|^2 + W(w) \right\} dx
\]
for $\varepsilon>0$.
 We consider this energy on a space of periodic functions with period $\omega>0$ under an average constraint.
 For $a\in(-1,1)$, let
\[
	X_a = \left\{ w \in L^1(\mathbb{T}) \biggm|
	\frac1\omega \int_0^\omega w\;dx = a \right\}, \quad
	\mathbb{T} = \mathbb{R}/(\omega\mathbb{Z}).
\]
We now consider a rescaled functional for $w\in X_a$
\begin{equation*}
	\mathcal{E}_\varepsilon(w) =
	\left\{
	\begin{array}{cl}
	E_\varepsilon(w)/\varepsilon^{1/2} &\text{if}\ E_\varepsilon(w) < \infty, \\
	\infty &\text{otherwise}.
	\end{array}
	\right.
\end{equation*}
The singular limit as $\varepsilon\to0$ turns to be
\begin{equation*}
	\mathcal{E}_0(w) =
	\left\{
	\begin{array}{cl}
	\sigma \ \#\ \text{jump of}\ w &\text{if}\ w\ \text{is a two-valued function with values}\ \pm 1, \\
	\infty &\text{otherwise}
	\end{array}
	\right.
\end{equation*}
for $w\in X_a$, where
\[
	\sigma = \int_{-1}^1 \sqrt{2W(\tau)}\; d\tau.
\]
It is stated in a rigorous way as follows.
\begin{prop} \label{PMM}
Assume that $W$ satisfies (W1)--(W3).
 The functional $\mathcal{E}_0$ is the $L^1$-Gamma limit of $\mathcal{E}_\varepsilon$ on $X_a$ as $\varepsilon\downarrow0$.
 In other words,
\begin{enumerate}
\item[(i)] (lower semicontinuity) $\mathcal{E}_0(w)\le\liminf_{\varepsilon\downarrow0}\mathcal{E}_\varepsilon(w_\varepsilon)$ if $w_\varepsilon\to w$ in $L^1(\mathbb{T})$;
\item[(i\hspace{-0.1em}i)] (recovery sequence) for any $w\in X_a$ there exists a sequence $v_\varepsilon$ such that $v_\varepsilon\to w$ in $X_a$ and $\mathcal{E}_\varepsilon(v_\varepsilon)\to\mathcal{E}_0(w)$ as $\varepsilon\downarrow0$.
\end{enumerate}
\end{prop}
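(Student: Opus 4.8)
The plan is to prove both the liminf inequality (i) and the existence of a recovery sequence (ii) by the classical device of reducing the relaxed energy to a total variation through the auxiliary antiderivative $\Phi(s)=\int_0^s\sqrt{2W(\tau)}\,d\tau$. The starting observation is the pointwise Young inequality
\[
	\frac12\varepsilon\left|\frac{dw}{dx}\right|^2 + W(w)
	\ge \sqrt{\varepsilon}\,\sqrt{2W(w)}\left|\frac{dw}{dx}\right|
	= \sqrt{\varepsilon}\,\left|\frac{d}{dx}\Phi(w)\right|,
\]
so that $\mathcal{E}_\varepsilon(w)=E_\varepsilon(w)/\varepsilon^{1/2}\ge \int_0^\omega|(\Phi\circ w)'|\,dx = \mathrm{TV}(\Phi\circ w)$, with equality exactly in the equipartition regime $\tfrac12\varepsilon|w'|^2=W(w)$. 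I would also record the elementary bound $\int_0^\omega W(w)\,dx \le \varepsilon^{1/2}\mathcal{E}_\varepsilon(w)$.

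For (i), I would take a sequence $w_\varepsilon\to w$ in $L^1(\mathbb{T})$ with $\liminf_\varepsilon\mathcal{E}_\varepsilon(w_\varepsilon)=:L$ finite (otherwise nothing to prove), pass to a subsequence realizing $L$ and converging a.e. First, $\int_0^\omega W(w)\,dx \le \liminf \int_0^\omega W(w_\varepsilon)\,dx \le \liminf \varepsilon^{1/2}\mathcal{E}_\varepsilon(w_\varepsilon)=0$ by Fatou, so $W(w)=0$ a.e.\ and hence $w\in\{-1,+1\}$ a.e.\ by (W2). Second, after a standard truncation controlling large values of $w_\varepsilon$ via (W3), $\Phi\circ w_\varepsilon\to\Phi\circ w$ in $L^1(\mathbb{T})$, and lower semicontinuity of total variation gives $L\ge\liminf\mathrm{TV}(\Phi\circ w_\varepsilon)\ge\mathrm{TV}(\Phi\circ w)$. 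Thus $w$ is a two-valued $BV$ function with values $\pm1$ on the circle, and since $\Phi(1)-\Phi(-1)=\sigma$, its total variation equals $\sigma$ times the number of jumps, i.e.\ $L\ge\mathcal{E}_0(w)$.

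For (ii), given a two-valued $w\in X_a$ with finitely many jumps, I would build $v_\varepsilon$ by keeping $w$ equal to $\pm1$ away from the jumps and replacing each jump by a rescaled optimal profile $\phi\bigl((x-x_0)/\varepsilon^{1/2}\bigr)$, where $\phi$ solves $\phi'=\sqrt{2W(\phi)}$ and connects $-1$ to $+1$. The change of variables $y=(x-x_0)/\varepsilon^{1/2}$ shows each layer contributes $\int\bigl(\tfrac12|\phi'|^2+W(\phi)\bigr)\,dy=\int_{-1}^1\sqrt{2W}\,d\tau=\sigma$ to $\mathcal{E}_\varepsilon$, while the flat pieces contribute nothing, so $\mathcal{E}_\varepsilon(v_\varepsilon)\to\sigma\cdot\#(\text{jumps})=\mathcal{E}_0(w)$. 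When the wells are degenerate and the heteroclinic has infinite length, I would first connect to within $\eta$ of $\pm1$ over a finite interval and then glue to the constant, which adds an error tending to $0$ as $\eta\downarrow0$.

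The one genuinely delicate point is the average constraint: the layers have width $O(\varepsilon^{1/2})$, so the naive $v_\varepsilon$ satisfies $\frac1\omega\int_0^\omega v_\varepsilon\,dx=a+O(\varepsilon^{1/2})$ rather than exactly $a$. Since $a\in(-1,1)$ forces $w$ to be non-constant, there is room to restore the exact mean by translating the jump points (equivalently lengthening a $\{+1\}$-plateau at the expense of a $\{-1\}$-plateau) by an amount $O(\varepsilon^{1/2})$; this keeps $v_\varepsilon\to w$ in $L^1$ and perturbs the energy only negligibly, so both $v_\varepsilon\in X_a$ and $\mathcal{E}_\varepsilon(v_\varepsilon)\to\mathcal{E}_0(w)$ are achieved. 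I expect this constraint-correction, rather than the energy computations themselves, to be the main technical obstacle.
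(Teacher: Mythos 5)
Your proposal is correct and takes essentially the same route as the paper: for part (i) the paper's proof is exactly your argument --- the Modica--Mortola (Young) inequality reducing $\mathcal{E}_\varepsilon$ to the total variation of $G(w)=\int_{-1}^{w}\sqrt{2W}\,d\tau$, the bound $\int_0^\omega W(w_\varepsilon)\,dx\to 0$ forcing $w\in\{\pm1\}$ a.e., and lower semicontinuity of total variation along an a.e.-convergent subsequence. For part (ii) the paper deliberately gives only a sketch (optimal-profile layers of width $O(\varepsilon^{1/2})$, with the remark that ``one has to be careful to keep the average condition and the periodicity''), and your construction --- rescaled heteroclinic profiles plus an $O(\varepsilon^{1/2})$ translation of a jump point to restore the exact mean --- is precisely the standard completion of that sketch.
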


This result is usually stated without periodicity.
 However, its modification to the periodic setting is easy.
 We give a proof for (i) for the reader's convenience.
 For the lower semicontinuity, a key observation is
\begin{equation} \label{EMM}
	\mathcal{E}_\varepsilon(w) 
	\ge \int_0^\omega \left| \frac{dw}{dx} \right| \sqrt{2W(w)}\; dx
	= \int_0^\omega \left| \frac{dG(w)}{dx} \right| dx
\end{equation}
with $G(\tau)=\int_{-1}^\tau\sqrt{2W(q)}\; dq$, which follows from the Cauchy-Schwarz inequality $\alpha^2+\beta^2\ge2\alpha\beta$ with $\alpha=\varepsilon^{1/2}|dw/dx|$, $\beta=\sqrt{2W}\bigm/\varepsilon^{1/2}$.
 The above inequality \eqref{EMM} sometimes called the Modica-Mortola inequality.
 To show (i), we may assume that $\mathcal{E}_\varepsilon(w_\varepsilon)$ is bounded for $\varepsilon\in(0,1)$. This implies that the total variation of $r_\varepsilon=G(w_\varepsilon)$ is bounded by \eqref{EMM}, and also $\int_0^\omega W(w_\varepsilon)\;dx\to0$ as $\varepsilon\downarrow0$.
 By (W3) together with (W1) and (W2), we observe that $|w_\varepsilon|\to1$ in measure.
 By taking a subsequence, we see that $w_{\varepsilon'}\to\pm1$ a.e.\ so that $r_\varepsilon\to0$ or $\sigma$ a.e.
 By lower semicontinuity of total variation, we obtain (i).

 To prove (i\hspace{-0.1em}i) we mollify $w$ so that jump part approximates a function satisfying $\varepsilon dw/dx=\sqrt{2W(w)}$ in a diffuse interface of length $O(\varepsilon^{1/2})$ as $\varepsilon\downarrow0$ up to a smaller interval.
 One has to be careful to keep the average condition and the periodicity.
 We do not give a detailed proof.
\begin{lemma} \label{LMM}
Assume that $W\in C^2(\mathbb{R})$ satisfies (W1) and (W2).
 Then there exists $\varepsilon_0>0$ (independent of $\omega$) such that if $\varepsilon/\omega^2<\varepsilon_0$, there is a solution $w=w_\varepsilon\in X_a$ of 
\begin{equation} \label{EEL}
	\frac{d}{dx} \left(\varepsilon \frac{d^2w}{dx^2} - W'(w) \right) = 0
\end{equation}
satisfying $|w_\varepsilon|\le1$ on $\mathbb{T}$ and $w_\varepsilon\to w_0$ in $L^1(\mathbb{T})$, where $w_0$ has exactly two jumps with values in $\{\pm1\}$.
\end{lemma}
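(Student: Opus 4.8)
The plan is to obtain $w_\varepsilon$ as a minimizer of the Modica--Mortola energy $E_\varepsilon$ (equivalently of the rescaled $\mathcal{E}_\varepsilon$) over the constraint set $X_a$, and to read off \eqref{EEL} as the associated Euler--Lagrange equation. Indeed, if $w$ minimizes $E_\varepsilon$ subject to $\frac1\omega\int_0^\omega w\,dx=a$, the Lagrange multiplier rule gives $-\varepsilon w''+W'(w)=\mu$ for a constant $\mu$, which is exactly $\frac{d}{dx}(\varepsilon w''-W'(w))=0$. So the whole task is to produce a minimizer, show it is non-constant, bound it by $1$ in modulus, and identify its $\varepsilon\downarrow0$ limit.

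First I would establish existence of a minimizer for each fixed $\varepsilon>0$ by the direct method. For a minimizing sequence the bound on $E_\varepsilon$ controls $\int_0^\omega|dw/dx|^2$, and the average constraint together with the Poincar\'e--Wirtinger inequality on $\mathbb{T}$ then bounds the full $H^1(\mathbb{T})$ norm. In one space dimension $H^1(\mathbb{T})$ embeds compactly into $C(\mathbb{T})$, so a subsequence converges uniformly; this lets $\int_0^\omega W(w)\,dx$ pass to the limit by continuity of $W$ alone (no growth hypothesis such as (W3) is needed for existence), while the Dirichlet term is weakly lower semicontinuous and the constraint is closed. Elliptic bootstrapping in \eqref{EEL}, using $W\in C^2$, upgrades the minimizer to a $C^2$ solution.

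For the limit and non-constancy it is convenient to rescale $x=\omega y$ onto the unit circle, which turns $\mathcal{E}_\varepsilon$ into the same functional with parameter $\tilde\varepsilon=\varepsilon/\omega^2$; this is why the threshold $\varepsilon_0$ can be taken independent of $\omega$. The constant competitor $w\equiv a$ has $\mathcal{E}_\varepsilon$-energy $\omega W(a)/\sqrt\varepsilon$, which after rescaling blows up as $\tilde\varepsilon\downarrow0$ since $W(a)>0$ for $a\in(-1,1)$; on the other hand, applying the recovery sequence of Proposition \ref{PMM}(ii) to a two-valued $\pm1$ function with the minimal (even) number of jumps, namely two, gives $\inf_{X_a}\mathcal{E}_\varepsilon\le 2\sigma+o(1)$ with $2\sigma=\inf_{X_a}\mathcal{E}_0$. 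Hence for $\tilde\varepsilon<\varepsilon_0$ the minimizer strictly beats the constant, so it is non-constant. The Modica--Mortola inequality \eqref{EMM} bounds the total variation of $G(w_\varepsilon)$, so with the uniform bound $|w_\varepsilon|\le1$ a subsequence converges in $L^1(\mathbb{T})$ to some $w_0$; lower semicontinuity (Proposition \ref{PMM}(i)) gives $\mathcal{E}_0(w_0)\le\liminf_\varepsilon\mathcal{E}_\varepsilon(w_\varepsilon)\le2\sigma$, and since any admissible limit has $\mathcal{E}_0\ge2\sigma$ we get $\mathcal{E}_0(w_0)=2\sigma$, i.e.\ $w_0$ is two-valued with exactly two jumps.

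The main obstacle is the pointwise bound $|w_\varepsilon|\le1$: the naive truncation $w\mapsto\max(-1,\min(1,w))$ lowers both terms of $E_\varepsilon$ but destroys the average constraint, so it cannot be used directly. I would instead first replace $W$ outside $[-1,1]$ by a $C^2$ extension $\hat W$ that is strictly convex and monotone (increasing on $[1,\infty)$, decreasing on $(-\infty,-1]$), matching $W$ to second order at $\pm1$ (possible because $W'(\pm1)=0$, as $\pm1$ are interior minima of $W\ge0$); this keeps (W1)--(W3) and leaves $W|_{[-1,1]}$, hence $\sigma$, unchanged. For the modified problem the minimizer is a periodic orbit of \eqref{EEL} admitting the first integral
\[
	\tfrac12\varepsilon|w'|^2 = W(w)+\mu w - C_0 \ge 0
\]
on its range $[m,M]$, with equality at the turning points $m=\min w$, $M=\max w$ and the maximum-principle relations $W'(M)+\mu\le0$, $W'(m)+\mu\ge0$ there. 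A short argument then excludes $M>1$: the convex increasing extension fixes the sign of $\mu$, while strict positivity of the first integral at $\tau=1$ (an interior point of $(m,M)$ once $M>1$) combined with $W(\pm1)=0$ forces a contradiction; the symmetric argument excludes $m<-1$. Thus $|w_\varepsilon|\le1$, and since $\hat W=W$ on $[-1,1]$ the function $w_\varepsilon$ solves \eqref{EEL} for the original $W$, completing the proof.
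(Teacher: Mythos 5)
Your proposal is correct, and its skeleton is the same as the paper's: minimize $\mathcal{E}_\varepsilon$ over $X_a$, rescale to $\omega=1$ so the smallness threshold depends only on $\varepsilon/\omega^2$, use Proposition \ref{PMM} to identify the two-jump limit and deduce non-constancy, and read off \eqref{EEL} from the Lagrange multiplier rule. The genuine divergence is in the two auxiliary steps, and it is instructive. For existence you use Poincar\'e--Wirtinger plus the compact embedding $H^1(\mathbb{T})\hookrightarrow C(\mathbb{T})$, so no growth of $W$ is needed, whereas the paper modifies $W$ to be coercive and argues via coercivity; both work, yours is marginally leaner. More substantially, for the bound $|w_\varepsilon|\le 1$ the paper uses exactly the truncation comparison you reject: it sets $\bar w=\max(\min(w,1),-1)$, notes $\mathcal{E}_\varepsilon(\bar w)<\mathcal{E}_\varepsilon(w)$ when $\{|w|>1\}$ has positive measure, and concludes the minimizer satisfies $|w_\varepsilon|\le 1$. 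Your objection is well taken: truncation generically changes the average, so $\bar w\notin X_a$ and no contradiction with \emph{constrained} minimality follows; the paper's sketch glosses over this point. Your replacement closes it: extend $W$ outside $[-1,1]$ monotonically ($\hat W'\ge 0$ on $[1,\infty)$, $\hat W'\le 0$ on $(-\infty,-1]$, $\hat W>0$ off $\pm1$), and run the phase-plane argument on the Euler--Lagrange equation with first integral $\tfrac12\varepsilon|w'|^2=\hat W(w)+\mu w-C_0$. Indeed, if $M=\max w>1$, the turning-point inequality $\hat W'(M)+\mu\le 0$ forces $\mu<0$; then $\hat W'(m)+\mu\ge 0$ forces the minimum $m$ into $(-1,1)$, so $g(m)=W(m)+\mu m\ge \mu m>\mu=g(1)$ (using $W\ge0$, $\mu<0$, $m<1$), while $1\in(m,M)$ requires $g(1)>C_0=g(m)$ for a non-constant closed orbit --- a contradiction; the case $m<-1$ is symmetric. (Only monotonicity of the extension is needed here, not convexity, and non-degeneracy of the turning points is automatic for non-constant $C^2$ periodic solutions by ODE uniqueness.) So your route supplies a complete argument precisely where the paper's sketch has a gap, at the modest cost of the extension construction; conversely, your non-constancy proof (constant competitor costs $W(a)/\sqrt{\varepsilon/\omega^2}\to\infty$ versus $2\sigma+o(1)$ for the minimizer) and the paper's (via the structure of the $\Gamma$-limit minimizer) are interchangeable, yours being the more quantitative.
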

\begin{proof}[Sketch of the proof]
By scaling $x'=x/\omega$, $\varepsilon'=\varepsilon/\omega^2$, we may assume $\omega=1$.
 Since we are interested in a minimizer $w_\varepsilon$ satisfying $|w_\varepsilon|\le1$, we may modify $W(w)$ for $|w|>1$ such that $w$ is coercive in the sense
\[
	W(w) \ge c_0|w|^2 - c_1, \quad
	w \in \mathbb{R}
\]
with some positive constants $c_0$ and $c_1$.
 In particular, $W$ satisfies (W3).
\begin{enumerate}
\item[Step 1.] (Compactness). 
Let $w_\varepsilon$ be a minimizer of $\mathcal{E}_\varepsilon$ in $X_a$.
 The existence of a minimizer is guaranteed in $H^1(\mathbb{T})\cap X_a$ by coercivity.
 Since $\min\mathcal{E}_\varepsilon\to\min\mathcal{E}_0$ by Proposition \ref{PMM}, $\{w_\varepsilon\}$ is bounded in $L^2(\mathbb{T})$ and $\int_0^\omega W(w_\varepsilon)\;dx\to0$.
 Since the total variation of $r_\varepsilon=G(w_\varepsilon)$ is bounded, as in \cite[Proof of Theorem 2.2]{GOU}, we see, by coercivity, that $r_\varepsilon$ is bounded in $BV(\mathbb{T})$.
 By standard compactness in $BV(\mathbb{T})$ (see e.g.\ E.\ Giusti \cite{Giu}) we conclude that $r_\varepsilon\to r_*\in BV(\mathbb{T})$ in $L^1(\mathbb{T})$ and a.e.\ on $\mathbb{T}$ by taking a subsequence.
 Since $r_\varepsilon$ is bounded in $BV(\mathbb{T})$, $|r_\varepsilon|$ is uniformly bounded for $\varepsilon<1$ because of one-dimensionality.
 Since $G$ is one-to-one, this implies $w_\varepsilon\to G^{-1}(r_*)$ a.e.\ on $\mathbb{T}$ and $w_\varepsilon$ is uniformly bounded for $\varepsilon<1$.
 Since $\int_0^\omega W(w_\varepsilon)\;dx\to0$, we conclude that $w_\varepsilon\to\pm1$, a.e.\ on $\mathbb{T}$.
 We further observe that $w_\varepsilon\to w_*$ in $L^1(\mathbb{T})$ as $\varepsilon\downarrow0$ with some $w_*\in X_a$ and $\mathcal{E}_0(w_*)<\infty$.
\item[Step 2.] (Application of the Gamma convergence).
 Since $w_\varepsilon\to w_*$ in $L^1(\mathbb{T})$, we observe, by Proposition \ref{PMM}, that $w_*$ is a minimizer of $\mathcal{E}_0$ in $X_a$.
\item[Step 3.] (Structure of $w_*$).
 A minimizer $w_*$ of $\mathcal{E}_0$ must have at least one jump because $a\in(-1,1)$.
 By periodicity, $w_*$ has exactly two jumps because $w_*$ is a minimizer.
 Thus for sufficiently small $\varepsilon>0$, say $\varepsilon<\varepsilon_0$, $w_\varepsilon$ is not a constant.
\item[Step 4.] (Range of $w_\varepsilon$).
 For $w\in H^1(\mathbb{T})$, we consider a truncated function
\[
	\bar{w} = \max \left( \min(w,1), -1 \right).
\]
Then, as is well-known,
\[
	\left| \frac{d\bar{w}}{dx} \right|
	\le \left| \frac{dw}{dx} \right| \quad\text{a.e.}\quad
	x \in \mathbb{T}.
\]
 Since $W(\tau)>0$ for $|\tau|>1$, 
we see that
\[
	\mathcal{E}_\varepsilon (\bar{w}) < \mathcal{E}_\varepsilon(w)
\]
if the set $\left\{ x\in\mathbb{T} \bigm| \left|w(x)\right|>1\right\}$ has a positive measure.
 Thus, our minimizer must satisfy
 \[
	|w_\varepsilon| \le 1 \quad\text{on}\quad \mathbb{T}.
\]
\item[Step 5.] (The Euler-Lagrange equations).
 Since we have a constant
\[
	\int_0^1 w\; dx = a,
\]
a minimizer of $\mathcal{E}_\varepsilon$ must satisfy the Euler-Lagrange equation with a Lagrange multiplier $\lambda$, which is a constant, of the form
\[
	-\varepsilon^{1/2} \frac{d^2w}{dx^2} + W'(w)/\varepsilon^{1/2} = \lambda
	\quad\text{in}\quad \mathbb{T}.
\]
Thus $w_\varepsilon$ must satisfy \eqref{EEL}.
\end{enumerate}
We now conclude that $w_\varepsilon$ satisfies all desired properties provided that $\varepsilon<\varepsilon_0$.
\end{proof}
\begin{proof}[Proof of Theorem \ref{TEx}]
We set
\[
	w = \frac{2}{\rho_g^m-\rho_\ell^m} (\rho - \rho_g^m) - 1
\]
and
\[
	W(w) = \Psi^m(\rho) - \ell_m(\rho)
	\quad\text{for}\quad |w| \le 1+\delta
\]
where
\[
	\ell_m(\rho) = (\partial_\rho \Psi^m) (\rho_g^m) (\rho-\rho_g^m) + \Psi^m(\rho_g^m)
\]
whose graph is a bitangent line of $\Psi^m$ going through $\bigl(\rho_g^m,\Psi^m(\rho_g^m)\bigr)$ and $\bigl(\rho_\ell^m,\Psi^m(\rho_\ell^m)\bigr)$.
 Here $\delta$ is taken small so that $\Psi^m(\rho)>\ell_m(\rho)$ for $|w|\le1+\delta$.
 We extend $W$ smoothly so that $W(w)>0$ for $|w|>1+\delta$.
 Then, Lemma \ref{LMM} yields Theorem \ref{TEx}.
\end{proof}
Theorem \ref{TEx} yields existence of a non-trivial periodic traveling wave solution with phase transition.
\begin{cor} \label{CEx}
Assume the same hypotheses of Theorem \ref{TEx} concerning $\varepsilon$, $\rho_*$ and $\bar{\mu}$ so that $\bar{\mu}=0$.
 Then there exists a small constant $\varepsilon_0$ such that for any $\omega>0$ there exists an $\omega$-periodic traveling wave solution to \eqref{EM1}--\eqref{EMME1} satisfying \eqref{EAv} with phase transition provided that $\varepsilon/\omega^2<\varepsilon_0$.
\end{cor}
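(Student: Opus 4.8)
The plan is to deduce the corollary directly from Theorem \ref{TEx} by reading the stationary solution it produces as a traveling wave, through the Galilean transformation $\tilde{x}=x-ct$, $\tilde{u}=u-c$ discussed in Section \ref{SNP}. First I would fix the momentum. By the double-well hypothesis on $\Psi$ and the remark preceding Theorem \ref{TEx}, I choose and then freeze a constant $m\neq0$ with $|m|$ so small that $\Psi^m$ retains the bitangent structure with wells $\rho_g^m<\rho_\ell^m$ and that $\rho_*\in(\rho_g^m,\rho_\ell^m)$. With this $m$ fixed and $\bar\mu=0$, Theorem \ref{TEx} supplies a constant $\varepsilon_0>0$, independent of $\omega$, such that whenever $\varepsilon/\omega^2<\varepsilon_0$ there is a non-constant $\omega$-periodic solution $\rho_\varepsilon$ of the stationary system \eqref{EMS}--\eqref{EMMS} (equivalently of \eqref{EG}) obeying the average constraint \eqref{EAv} and taking values in $[\rho_g^m,\rho_\ell^m]$.

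Next I would turn $\rho_\varepsilon$ into a genuine traveling wave of \eqref{EM1}--\eqref{EMME1}. For an arbitrary speed $c\in\R$ I set
\[
	\rho(x,t) = \rho_\varepsilon(x-ct), \qquad
	m(x,t) = m + c\,\rho_\varepsilon(x-ct),
\]
where on the right $m$ denotes the frozen constant, which is exactly the mass flux $j=m(x,t)-\rho(x,t)\,c$. The mass equation \eqref{EM1} then holds by direct differentiation, while the momentum equation \eqref{EMME1} with $\bar\mu=0$ follows from the Galilean invariance of the Euler-Korteweg system: by construction $(\rho_\varepsilon,m)$ solves the moving-frame stationary system \eqref{EMS}--\eqref{EMMS}, and boosting back by $c$ returns a solution of the time-dependent equations. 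Taking $c=0$ already exhibits $(\rho_\varepsilon,m)$ itself as a stationary traveling wave, so no genuine restriction on $c$ arises. Because $j=m\neq0$, the fluid velocity $u=m(x,t)/\rho(x,t)$ differs from the wave speed $c$; hence this wave carries phase transition, and since $\rho_\varepsilon$ is non-constant with $m\neq0$ its velocity profile is non-constant as well. The spatial period is $\omega$ and the density average is $\rho_*$, both inherited verbatim from Theorem \ref{TEx} since density is Galilean invariant, so \eqref{EAv} is preserved.

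The only point requiring care, and thus the main obstacle, is the Galilean-invariance step, namely verifying that the moving-frame stationary solution really satisfies the time-dependent momentum equation \eqref{EMME1}. Concretely one reverses the reduction that led from \eqref{EMME1} to \eqref{EMMS}: substituting the traveling ansatz and using that the mass flux $m$ is constant reproduces exactly \eqref{ESE} with $\bar\mu=0$, which is \eqref{EG}. This is a short but bookkeeping-heavy computation, and the subtlety is to keep straight the identification of the frozen constant $m$ with the mass flux $j=m-\rho c$, since it is precisely $j\neq0$ that encodes phase transition. Everything else, including existence, periodicity, the average constraint, and the range of $\rho_\varepsilon$, is handed over directly by Theorem \ref{TEx}.
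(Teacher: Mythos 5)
Your proposal is correct and takes essentially the same route as the paper: the paper states Corollary \ref{CEx} as an immediate consequence of Theorem \ref{TEx} combined with the moving-frame (Galilean) identification from Section \ref{SNP}, which is precisely the boost you perform, with the constant mass flux $j=m\neq0$ certifying phase transition. Your explicit check that the boosted pair $\bigl(\rho_\varepsilon(x-ct),\, m+c\rho_\varepsilon(x-ct)\bigr)$ satisfies \eqref{EM1}--\eqref{EMME1} is exactly the bookkeeping the paper leaves implicit.
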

\begin{remark} \label{RE}
Of course, our theory always implies that the existence of non-trivial periodic traveling wave solutions with no phase transition for the Euler-Koreteweg equations ($\bar{\mu}=0$) and the Navier-Stokes-Korteweg equations ($\bar{\mu}>0$).
\end{remark}

\section{Monotone traveling wave} \label{SMT} 

We are interested in a traveling wave solution with or without phase transition which is monotone in $\mathbb{R}$.
 We shall prove that our periodic traveling wave solution tends to such a solution as the period tends to the infinity.
\begin{prop} \label{PMon}
Assume the same hypotheses of Theorem \ref{TEx} with $\bar{\mu}=0$.
 Then, for any $\varepsilon>0$, there exists a unique (up to translation) solution $\rho_\varepsilon^\infty$ \eqref{EMS}--\eqref{EMMS} in $\mathbb{R}$ satisfying
\[
	\lim_{x\to-\infty} \rho_\varepsilon^\infty(x) = \rho_g^m, \quad
	\lim_{x\to+\infty} \rho_\varepsilon^\infty(x) = \rho_\ell^m.
\]
The solution $\rho_\varepsilon^\infty$ is non-decreasing in $x$.
 In the case $m\neq0$, the velocity $u_\varepsilon^\infty=m/\rho_\varepsilon^\infty$ is not a constant function in $x$.
 (Even if $\bar{\mu}>0$, such a unique solution exists if $m=0$ with constant speed $u_\varepsilon^\infty$.)
\end{prop}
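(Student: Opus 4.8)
The plan is to exploit the conservative (Hamiltonian) structure of the stationary equation, carrying out the phase plane analysis alluded to in the introduction in the $(\rho,d\rho/dx)$-plane. Since $\bar{\mu}=0$, the system \eqref{EMS}--\eqref{EMMS} reduces via \eqref{EG} to
\[
	\varepsilon \frac{d^2\rho}{dx^2} = \partial_\rho \Psi^m(\rho) - C
\]
for some integration constant $C\in\mathbb{R}$. First I would fix $C$. The prescribed limits $\rho\to\rho_g^m$ and $\rho\to\rho_\ell^m$, together with the boundedness of the orbit, force $d^2\rho/dx^2\to0$ at $x\to\pm\infty$, hence $\partial_\rho\Psi^m(\rho_g^m)=C=\partial_\rho\Psi^m(\rho_\ell^m)$. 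Thus $C$ must be the common slope of $\Psi^m$ at the two contact points, which is precisely the slope of the bitangent line $\ell_m$; in particular $C$ is uniquely determined and the two prescribed end states are exactly the equilibria lying on the bitangent.

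Next I would derive the first integral. Multiplying the ODE by $d\rho/dx$ and integrating gives
\[
	\frac{\varepsilon}{2} \left| \frac{d\rho}{dx} \right|^2 = \Psi^m(\rho) - C\rho + E
\]
for a constant $E$. Setting $W(\rho)=\Psi^m(\rho)-\ell_m(\rho)$ as in the proof of Theorem \ref{TEx}, and noting $\ell_m(\rho)=C\rho+\mathrm{const}$, the quantity $\Psi^m(\rho)-C\rho$ equals $W(\rho)$ up to an additive constant; choosing $E$ to absorb it yields
\[
	\frac{\varepsilon}{2} \left| \frac{d\rho}{dx} \right|^2 = W(\rho).
\]
Here the bitangent hypothesis is essential: it guarantees $W(\rho_g^m)=W(\rho_\ell^m)=0$, so a single value of $E$ simultaneously annihilates the right-hand side at both ends. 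This is exactly the saddle--saddle connection condition, i.e. the statement that both equilibria $(\rho_g^m,0)$ and $(\rho_\ell^m,0)$ sit on the same energy level of the phase portrait.

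I would then construct the orbit by quadrature. Since $W>0$ on $(\rho_g^m,\rho_\ell^m)$, taking the increasing branch $d\rho/dx=\sqrt{2W(\rho)/\varepsilon}>0$ gives monotonicity and, by separation of variables,
\[
	x - x_0 = \int_{\rho_*}^{\rho} \sqrt{\frac{\varepsilon}{2W(s)}}\; ds.
\]
Because $W\ge0$ is $C^2$ and vanishes at $\rho_g^m$ and $\rho_\ell^m$, these are (local) minima, so $W$ vanishes there to order at least two; hence the integrals toward either endpoint diverge, $x$ ranges over all of $\mathbb{R}$, and $\rho\to\rho_g^m$ as $x\to-\infty$, $\rho\to\rho_\ell^m$ as $x\to+\infty$. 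Inverting produces the non-decreasing solution $\rho_\varepsilon^\infty$.

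For uniqueness up to translation I would note that any solution meeting the boundary conditions must carry the determined $C$ and $E$, hence obeys the same first integral; on the open interval where $W>0$ the right-hand side $\sqrt{2W/\varepsilon}$ is $C^1$, so Picard--Lindel\"of yields a unique trajectory through any prescribed value $\rho_*\in(\rho_g^m,\rho_\ell^m)$, the only freedom being the base point $x_0$, i.e. a spatial translation. Finally, when $m\neq0$ the velocity $u_\varepsilon^\infty=m/\rho_\varepsilon^\infty$ is non-constant since $\rho_\varepsilon^\infty$ is; and when $\bar{\mu}>0$ with $m=0$ the velocity $u=m/\rho$ vanishes identically, so $\partial_x(\bar{\mu}\partial_x u)=0$ and \eqref{EMMS} again collapses to \eqref{EG}, whence the identical construction applies. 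The main obstacle is the asymptotic analysis at the ends --- verifying $d\rho/dx\to0$ to pin down $C$ and confirming the quadrature integral diverges at the endpoints --- but this is controlled entirely by the double-zero structure of $W$ inherited from the bitangent hypothesis.
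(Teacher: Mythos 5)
Your proposal is correct and takes essentially the same route as the paper: both reduce \eqref{EMS}--\eqref{EMMS} via \eqref{EG} to a second-order autonomous ODE, pin down the free integration constant by forcing the two end states to be equilibria on the same energy level (the paper's ``$\lambda=0$'' after normalizing to the double-well $W$, your ``$C=$ slope of the bitangent''), and then use the conserved first integral to obtain the unique monotone heteroclinic connection. Your quadrature formula, the divergence of $\int \sqrt{\varepsilon/2W}\,ds$ at the double zeros, and the Picard--Lindel\"of step are simply a more explicit rendering of the paper's ``simple phase plane analysis'' based on the Hamiltonian $H$.
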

\begin{proof}
This fact is already noticed in \cite{BDDJ} by noting that the system is Hamiltonian.
 Since the proof is elementary, we give it here for completeness.
 The equation \eqref{EMMS} can be written as \eqref{EG} on $\mathbb{R}$ instead of $\mathbb{T}$.
 As in the proof of Theorem \ref{TEx}, by changing dependent variables, the problem is reduced to solve
\begin{equation} \label{EL}
	\varepsilon \frac{d^2w}{dx^2} - W'(w) = \lambda \quad
	\text{(}\lambda\text{: constant) in} \quad \mathbb{R}
\end{equation}
with
\[
	w(-\infty) = -1, \quad
	w(+\infty) = +1
\]
where $w(\pm\infty)=\lim_{x\to\pm\infty}w(x)$.
 Multiplying $w_x=dw/dx$ with \eqref{EL}, we get
\begin{equation} \label{EHa}
	\frac{d}{dx} H(w,w_x) = 0
\end{equation}
with $H(q,p)=\varepsilon p^2/2-W(q)-\lambda q$.
 The problem is to find a heteroclinic orbit for \eqref{EHa} connecting $w(-\infty)=-1$ to $w(+\infty)=+1$.
 Since $W'$ is minimized at $\pm1$ with equal depth, to find such orbit $\lambda$ must be zero.
 Our assertion is obtained by a simple phase plane analysis.
 There is only one level set of $H$ connecting $(\pm1,0)$ and this corresponds the orbit of heteroclinic orbit.
 The monotonicity is clear since on the orbit $p=w_x$ is always positive if $w(-\infty)=-1$. 
\end{proof}
By using a moving frame to Proposition \ref{PMon}, one finds a monotone traveling wave solution for the Euler-Korteweg equations with phase transition.
\begin{thm} \label{TMon}
Assume the same hypotheses of Theorem \ref{TEx} with $\bar{\mu}=0$.
 Assume that $u_1,u_2\in\mathbb{R}$ satisfies $u_2-u_1=m(1/\rho_\ell^m-1/\rho_g^m)$.
 Then, for any $\varepsilon>0$, there exists a unique (up to translation) traveling wave solution $(\rho_\varepsilon^\infty,u_\varepsilon^\infty)$ of \eqref{EM} and \eqref{EMME} with $\bar{\mu}=0$ such that
\[
	\rho_\varepsilon^\infty(-\infty) = \rho_g^m, \quad
	\rho_\varepsilon^\infty(+\infty) = \rho_\ell^m, \quad
	u_\varepsilon^\infty(-\infty) = u_1, \quad
	u_\varepsilon^\infty(+\infty) = u_2
\]
with speed $c=u_2-m/\rho_g^m=u_1-m/\rho_\ell^m$;
 if $m\neq0$, this solution admits phase transition.
 (In the case $\bar{\mu}>0$, there still exists a unique (up to transition) traveling wave solution with no phase transition, i.e., $c=u_1=u_2$ with $m=0$.)
\end{thm}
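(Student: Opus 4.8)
The plan is to deduce Theorem~\ref{TMon} from Proposition~\ref{PMon} by undoing the Galilean transformation that turns a traveling wave into a moving-frame stationary profile. First I would seek a solution of the form $(\rho(x,t),u(x,t))=(\bar\rho(x-ct),\bar u(x-ct))$ and pass to the moving frame $\xi=x-ct$, $\tilde u=\bar u-c$. In this frame $(\bar\rho,\tilde u)$ is a stationary solution of the Euler--Korteweg system, so \eqref{EMS} forces the moving-frame momentum $m=\bar\rho\,\tilde u$ (equivalently the mass flux $j=\bar\rho(\bar u-c)$) to be constant, and \eqref{EMMS} reduces to \eqref{EG} on $\mathbb R$.

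Next I would invoke Proposition~\ref{PMon}, which for every $\varepsilon>0$ supplies a unique-up-to-translation non-decreasing solution $\rho_\varepsilon^\infty=\bar\rho$ of \eqref{EMS}--\eqref{EMMS} on $\mathbb R$ with $\bar\rho(-\infty)=\rho_g^m$ and $\bar\rho(+\infty)=\rho_\ell^m$. I would then reconstruct the velocity directly from the constancy of $m$ by setting $\bar u=c+m/\bar\rho$, i.e. $u_\varepsilon^\infty=c+m/\rho_\varepsilon^\infty$, which is well defined since $\rho_\varepsilon^\infty$ stays in $[\rho_g^m,\rho_\ell^m]\subset(0,\infty)$. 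To confirm that $(\bar\rho,\bar u)$ solves the time-dependent system \eqref{EM1}--\eqref{EMME1} with $\bar\mu=0$, I would verify the continuity equation \eqref{EM1} explicitly — with the moving-frame momentum $m$ constant the lab-frame momentum is $\rho u=c\bar\rho+m$, so $\partial_t\rho+\partial_x(\rho u)=-c\bar\rho'+c\bar\rho'=0$ — and obtain the momentum equation from the Galilean invariance of the Euler--Korteweg equations, its moving-frame version being exactly \eqref{EMMS}.

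Taking limits $\xi\to\mp\infty$ in $\bar u=c+m/\bar\rho$ gives $u_\varepsilon^\infty(-\infty)=c+m/\rho_g^m$ and $u_\varepsilon^\infty(+\infty)=c+m/\rho_\ell^m$; matching these to $u_1$ and $u_2$ simultaneously pins down the speed $c$ and reproduces the stated compatibility relation $u_2-u_1=m(1/\rho_\ell^m-1/\rho_g^m)$, a Rankine--Hugoniot-type condition. (Here I would double-check the precise pairing of the two endpoints against the formula given for $c$ in the statement.) Uniqueness up to translation is inherited from Proposition~\ref{PMon}, since $\bar\rho$ is unique up to translation and $\bar u$ is then pinned by $m$ and $c$. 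The characterization of phase transition is immediate from the definition in Section~\ref{SNP}: the mass flux equals $m$, so $m\neq0$ means phase transition, and in that case $u_\varepsilon^\infty=c+m/\rho_\varepsilon^\infty$ is genuinely non-constant because $\rho_\varepsilon^\infty$ is.

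For the parenthetical viscous case $\bar\mu>0$ I would specialize to $m=0$: then $\tilde u\equiv0$, so the lab-frame velocity is the constant $c$ and the viscous contribution $\partial_x(\bar\mu\,\partial_x\tilde u)$ vanishes identically. The profile equation for $\rho$ therefore coincides with the $\bar\mu=0$, $m=0$ case, and Proposition~\ref{PMon} again yields a unique-up-to-translation monotone heteroclinic from $\rho_g=\rho_g^0$ to $\rho_\ell=\rho_\ell^0$, now with $u_1=u_2=c$ and no phase transition. I expect the only genuine subtleties to be the frame bookkeeping and the verification of the compatibility condition; the analytic heart — existence, uniqueness and monotonicity of the density profile through the Hamiltonian phase-plane argument — is already furnished by Proposition~\ref{PMon}, so Theorem~\ref{TMon} is in essence its corollary.
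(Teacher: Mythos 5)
Your proposal is correct and follows essentially the same route as the paper, which obtains Theorem \ref{TMon} from Proposition \ref{PMon} precisely by this moving-frame (Galilean) argument, reconstructing $u$ from the constant mass flux, and offers no further proof. Your flagged concern about the endpoint pairing is justified: the statement's formula $c=u_2-m/\rho_g^m=u_1-m/\rho_\ell^m$ is inconsistent with the hypothesis $u_2-u_1=m(1/\rho_\ell^m-1/\rho_g^m)$ unless $m=0$, while your pairing $c=u_1-m/\rho_g^m=u_2-m/\rho_\ell^m$ (i.e., $u_1=c+m/\rho_g^m$ at $-\infty$ and $u_2=c+m/\rho_\ell^m$ at $+\infty$) is the one consistent with that hypothesis and with $j=\rho(u-c)=m$, so the statement's formula appears to be a typo with the indices swapped.
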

In the case of $\bar{\mu}>0$, there seems to be no traveling wave solutions with phase transition under suitable assumptions of the space infinity.

We shall prove that our periodic traveling wave solution tends to monotone traveling wave solution as the period tends to infinity.
 To simplify the argument, we further assume that
\[
	W(w) = \Psi^m(\rho) - \ell_m(\rho)
\]
in the proof of Theorem \ref{TEx} has only one critical point $w_*\in(-1,1)$ on $(-1,1)$ with $W''(w_*)<0$;
 see Figure \ref{FPh}.
\begin{figure}[H]
\centering
\includegraphics[width=8.2cm]{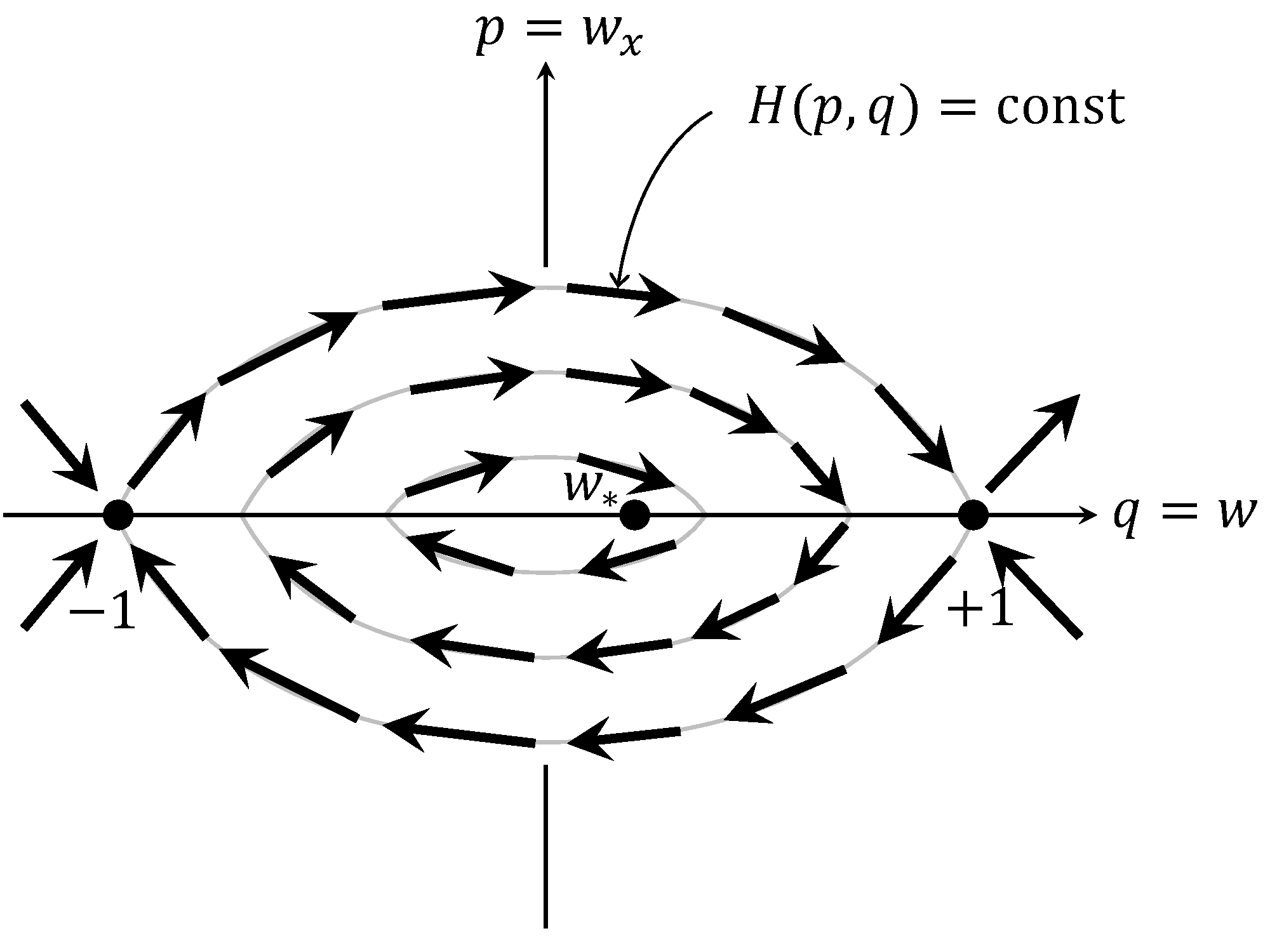}
\caption{phase plane for $\lambda=0$} \label{FPh}
\end{figure}
\begin{lemma} \label{LPL}
Assume $W$ has a unique critical point $w_*\in(-1,1)$ on $(-1,1)$ with $W''(w_*)<0$.
 Assume the same hypotheses of Lemma \ref{LMM}.
 Let $x_0$ be a jump point of $w_0$ so that $w_0(x_0+0)>w_0(x_0-0)$.
 Then the function
\[
	w_\varepsilon^\omega(x) := w_\varepsilon(x_0+x)
\]
converges to a solution $w_\varepsilon^\infty$ of
\[
	\varepsilon \frac{d^2w}{dx^2} - W'(w) = 0
	\quad\text{on}\quad \mathbb{R}
\]
with $w(\pm\infty)=\pm1$ as $\omega\to\infty$ by taking a suitable subsequence. 
 If we set $w_\varepsilon^\omega(x)=w_\varepsilon(x_0+x+\delta_\omega)$ with some $\delta_\omega\in\mathbb{R}$, the convergence becomes a full convergence.
\end{lemma}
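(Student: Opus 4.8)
The plan is to fix $\varepsilon>0$, regard $w_\varepsilon=w_{\varepsilon,\omega}$ as the family of minimizers from Lemma \ref{LMM} on $\mathbb{T}=\mathbb{R}/(\omega\mathbb{Z})$ (which is legitimate once $\omega$ is large, since then $\varepsilon/\omega^2<\varepsilon_0$), and let $\omega\to\infty$ with $\varepsilon$ frozen. Each $w_{\varepsilon,\omega}$ solves the Euler--Lagrange equation \eqref{EEL}, i.e. $\varepsilon w''-W'(w)=\lambda_\omega$ for a constant $\lambda_\omega$, satisfies $|w_{\varepsilon,\omega}|\le1$, and along its orbit the Hamiltonian $H(w,w')=\varepsilon(w')^2/2-W(w)-\lambda_\omega w$ from \eqref{EHa} is conserved, say $H\equiv E_\omega$. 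The goal is to get uniform-in-$\omega$ $C^2$ bounds, extract a locally convergent subsequence after centering at the rising interface, and identify the limit with the heteroclinic orbit of Proposition \ref{PMon}.

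First I would record the uniform estimates. Since $|w_{\varepsilon,\omega}|\le1$ and $W\in C^2$, both $W(w_{\varepsilon,\omega})$ and $W'(w_{\varepsilon,\omega})$ are bounded independently of $\omega$. Integrating \eqref{EEL} over one period and using periodicity gives $\lambda_\omega=-\omega^{-1}\int_0^\omega W'(w_{\varepsilon,\omega})\,dx$, so $|\lambda_\omega|$ is bounded. Evaluating $H$ at a maximum point $w_{\max}$ of $w_{\varepsilon,\omega}$ (where $w'=0$) shows $E_\omega=-W(w_{\max})-\lambda_\omega w_{\max}$ is bounded; hence $\varepsilon(w')^2/2=E_\omega+W(w)+\lambda_\omega w$ is bounded and so is $w''=(W'(w)+\lambda_\omega)/\varepsilon$. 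Thus $\{w_{\varepsilon,\omega}\}$ is bounded in $C^2(\mathbb{R})$ after periodic extension. I would then translate, setting $w_\varepsilon^\omega(x)=w_{\varepsilon,\omega}(x_0+x+\delta_\omega)$ with $\delta_\omega$ chosen so that $w_\varepsilon^\omega(0)=w_*$ on the rising interface, where $(w_\varepsilon^\omega)'(0)>0$. On any fixed interval $[-R,R]$ the second interface sits at distance of order $\omega/2\to\infty$, so the periodic extension is harmless there. Arzel\`a--Ascoli and a diagonal argument then yield a subsequence with $w_\varepsilon^\omega\to w_\varepsilon^\infty$ in $C^2_{\mathrm{loc}}(\mathbb{R})$, where $w_\varepsilon^\infty$ solves $\varepsilon w''-W'(w)=\lambda_\infty$ with $\lambda_\infty=\lim\lambda_\omega$ and $|w_\varepsilon^\infty|\le1$.

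The heart of the matter, and the step I expect to be the main obstacle, is to show $\lambda_\infty=0$ and $E_\omega\to0$, so that the limiting orbit is exactly the separatrix rather than a nontrivial periodic orbit or a one-sided homoclinic loop. I would argue from the phase plane of Figure \ref{FPh}: the orbit of $w_{\varepsilon,\omega}$ is a closed curve around the center which the minimizer traverses exactly once per period (two interfaces being energetically optimal under the constraint), and the traversal ``time'' equals $\omega\to\infty$; in a one-dimensional conservative system the period diverges only as the orbit approaches rest points. Because $a\in(-1,1)$ is fixed, the solution must spend a definite fraction of the period in $\{w<w_*\}$ and in $\{w>w_*\}$, and these fractions have length of order $\omega\to\infty$; the dynamics must therefore be slow at both extremes, which excludes both a collapse onto the center $(w_*,0)$ and a one-sided homoclinic loop and forces the turning points $w_{\min}\to-1$ and $w_{\max}\to+1$. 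The turning-point identity $H(w_{\min},0)=H(w_{\max},0)$, i.e. $-W(w_{\min})-\lambda_\omega w_{\min}=-W(w_{\max})-\lambda_\omega w_{\max}$, rearranges to $\lambda_\omega(w_{\max}-w_{\min})=W(w_{\min})-W(w_{\max})\to W(-1)-W(1)=0$, so $\lambda_\omega\to0$; and $E_\omega=-W(w_{\max})-\lambda_\omega w_{\max}\to0$. Consequently $w_\varepsilon^\infty$ solves $\varepsilon w''-W'(w)=0$ on the zero level $\varepsilon(w')^2/2=W(w)$, which is precisely the heteroclinic orbit connecting $(\pm1,0)$; since $w_\varepsilon^\infty(0)=w_*$ with $(w_\varepsilon^\infty)'(0)=\sqrt{2W(w_*)/\varepsilon}>0$ it lies on the rising branch, giving $w_\varepsilon^\infty(\pm\infty)=\pm1$.

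Finally I would invoke uniqueness. By Proposition \ref{PMon} the heteroclinic with $w(\pm\infty)=\pm1$ is unique up to translation, and the normalization $w_\varepsilon^\infty(0)=w_*$ (with positive slope) pins the translation. Therefore every subsequence of $\{w_\varepsilon^\omega\}$ carrying the $\delta_\omega$-centering has a further subsequence converging to this same limit, which upgrades subsequential convergence to full convergence. Retaining only the coarse translation by $x_0$ and dropping the fine shift $\delta_\omega$ leaves the phase of the interface undetermined, which is exactly why only subsequential convergence is asserted in that case.
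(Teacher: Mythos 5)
Your overall architecture (Euler--Lagrange equation with a constant multiplier, uniform bounds, Arzel\`a--Ascoli, phase-plane identification of the limit, uniqueness to upgrade to full convergence) parallels the paper's proof, and your preliminary estimates on $\lambda_\omega$, $E_\omega$, $w'$, $w''$ are fine. The gap is exactly at the step you flag as the heart of the matter. You claim that ``because $a\in(-1,1)$ is fixed, the solution must spend a definite fraction of the period in $\{w<w_*\}$ and in $\{w>w_*\}$,'' and from this you force $w_{\min}\to-1$, $w_{\max}\to+1$, whence $\lambda_\omega\to0$ via the turning-point identity. This implication does not follow from the average constraint, the equation, $|w|\le1$ and period $\to\infty$ alone. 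Concretely, take $a\in(w_*,1)$ with $W''(a)>0$ and $\lambda$ near $-W'(a)\neq0$: the perturbed potential $-W(q)-\lambda q$ then has a saddle $q_+(\lambda)$ near $a$, and the closed orbits just inside its homoclinic loop lie in $[w_{\min},q_+(\lambda)]\subset(-1,1)$, spend all but a bounded amount of time near $q_+(\lambda)$, and have period diverging as they approach the loop. Tuning $(\lambda,E)$ one gets, for every large $\omega$, such an orbit with period exactly $\omega$ and average exactly $a$; along this family $\lambda_\omega\to -W'(a)\neq 0$ and the local limit is a homoclinic, not a heteroclinic, profile. So nothing in your argument shows that \emph{both} turning points degenerate, and the turning-point identity is never activated.

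What is missing is the minimality of $w_{\varepsilon,\omega}$, which you mention parenthetically (``two interfaces being energetically optimal'') but never use where it is needed. The paper uses it through Lemma \ref{LMM}: the rescaled minimizers $w^1_{\varepsilon/\omega^2}$ converge a.e.\ to a function taking \emph{both} values $\pm1$ on sets of fixed positive measure fraction (this is \eqref{EACON}), which rules out orbits confined near $[w_{\min},a]$ and forces the orbit to pass near both $(\pm1,0)$. Moreover, the paper obtains $\lambda_{\varepsilon,\omega}\to0$ directly from the identity you already wrote down: integrating \eqref{EEL} over a period gives $\lambda_{\varepsilon,\omega}=-\int_0^1 W'\bigl(w^1_{\varepsilon/\omega^2}(y)\bigr)\,dy$, and since $w^1_{\varepsilon'}\to\pm1$ a.e.\ with $|w^1_{\varepsilon'}|\le1$ and $W'(\pm1)=0$, dominated convergence yields $\lambda_{\varepsilon,\omega}\to0$ --- no turning-point analysis needed; you extracted only boundedness from that identity because you never invoked the a.e.\ convergence. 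Your proof is repaired by inserting this use of Lemma \ref{LMM}/\eqref{EACON}; after that, your phase-plane identification of the limit as the rising heteroclinic and your uniqueness argument for full convergence go through essentially as in the paper (your normalization $w_\varepsilon^\omega(0)=w_*$ versus the paper's choice of the average value $a$ is immaterial).
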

\begin{proof}
We may assume that $x_0=0$.
 Since
\[
	w_\varepsilon^\omega (x) = w_{\varepsilon/\omega^2}^1(x/\omega)
\]
and $w_\varepsilon^\omega$ satisfies
\begin{equation} \label{EPW}
	\varepsilon \frac{d^2w}{dx^2} - W'(w) = \lambda_{\varepsilon,\omega} \quad\text{on}\quad \mathbb{R}
	\quad\text{with}\quad w(x+\omega) = w(x),
\end{equation}
integrating this differential equation over $\mathbb{T}$ to get
\[
	\omega\lambda_{\varepsilon,\omega} 
	= -\int_0^\omega W' \left( w_\varepsilon^\omega(x) \right) dx.
\]
Since $w_{\varepsilon'}^1(y)\to1$ or $-1$ a.e.\ $y\in\mathbb{R}$ with $|w_{\varepsilon'}^1|\le1$ as $\varepsilon'\to0$, 
\[
	\lambda_{\varepsilon,\omega} 
	= -\frac1\omega \int_0^\omega W' \left( w'_{\varepsilon/\omega^2}(x/\omega) \right) dx
	= -\int_0^1 W' \left( w_{\varepsilon/\omega^2}^1 (y) \right) dy \to 0
\]
as $\omega\to\infty$ by the dominated convergence theorem.
 We fix $\varepsilon>0$ in \eqref{EPW} and let $\omega\to\infty$.
 The equation \eqref{EPW} gives a bound for $w_{xx}$ so that $w_x$ is bounded.
 Thus, by Arzel\`{a}-Ascoli theorem $w_\varepsilon^\omega\to w_\varepsilon^\infty$ for some $w_\varepsilon^\infty$ as $\omega\to\infty$ with its first derivative locally uniformly in $\mathbb{R}$ by taking a subsequence.
 Thus, the limit $w_\varepsilon^\infty$ must satisfy
\begin{equation} \label{EWh}
	\varepsilon \frac{d^2w}{dx^2} - W' (w) = 0
	\quad\text{in}\quad \mathbb{R}
\end{equation}
with $|w|\le1$ since $\lambda_{\varepsilon,\omega}\to0$ as $\omega\to\infty$.

We shall prove that $w_\varepsilon^\infty$ is not a constant for fixed $\varepsilon>0$.
 If $w_\varepsilon^\infty$ were a constant function, it must be either $\pm1$ or $w_*$.
 We observe that up to subsequence
\begin{equation} \label{EACON}
	\lim_{\omega\to\infty} w_{\varepsilon/\omega^2}^1(y)
	= \operatorname{sgn} y \quad\text{for a.e.}\quad
	y \in (-\omega/2,\omega/2).
\end{equation}
We notice that a periodic solution is a closed orbit of the phase plane Figure \ref{FPh} and its perturbation with small $\lambda$.
 Thus if it is close to $w_*$, the value of $w_\varepsilon^\omega$ is restricted in a small neighborhood of $w_*$.
 However, this is impossible since \eqref{EACON} holds.
 Thus $w_\varepsilon^\infty \not\equiv w_*$.
 In the phase plane Figure \ref{FPh}, the derivative of a periodic solution $w$ at $0$ is away from zero i.e., $(w_\varepsilon^\omega)_x(0)\ge c>0$ with some positive constant $c$ provided that $\omega$ is sufficiently large.
 This is because both $(-1,0)$ and $(+1,0)$ is close to the periodic orbit of $w_\varepsilon^\omega$ if $\omega$ is sufficiently large by \eqref{EACON}.
 The same assertion holds for the perturbed system with $\lambda$.
 Thus, $w_\varepsilon^\infty(0)\ge c$ which yields $w_\varepsilon^\infty\not\equiv\pm1$.

We may take $\delta_\omega$ so that $w_\varepsilon^\omega(0)$ equals the average of $w_\varepsilon^\omega$ over $\mathbb{T}$ which is independent of $\omega$, then the convergence is full convergence since a non-decreasing  solution is unique if we determine the value at $0$.
\end{proof}
This lemma immediately yields
\begin{thm} \label{TCMon}
Assume the same hypotheses of Theorem \ref{TEx}.
 Assume $W=\Psi^m-\ell$ has only one critical point $w_*\in(-1,1)$ with $W''(w_*)<0$.
 Let 
\[
	\rho_\varepsilon^\omega(x) = \rho_{\varepsilon/\omega^2}^1 (x+\delta_\omega/\omega),
\]
where $\rho_\varepsilon^1$ is a $1$-periodic solution of Theorem \ref{TEx}.
 With a suitable choice of $\delta_\omega\in\mathbb{R}$, the function converges to $\rho_\varepsilon^\infty$ in Theorem \ref{TMon} as $\omega\to\infty$.
\end{thm}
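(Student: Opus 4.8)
The plan is to deduce Theorem~\ref{TCMon} from Lemma~\ref{LPL} by the affine change of dependent variable used in the proof of Theorem~\ref{TEx}. There one sets $w=\frac{2}{\rho_\ell^m-\rho_g^m}(\rho-\rho_g^m)-1$, equivalently $\rho=aw+b$ with $a=(\rho_\ell^m-\rho_g^m)/2>0$ and $b=(\rho_\ell^m+\rho_g^m)/2$, and defines the double-well $W$ by $W(w)=\Psi^m(\rho)-\ell_m(\rho)$. Being affine and invertible, this substitution intertwines the density equation \eqref{EG} (that is, \eqref{EMS}--\eqref{EMMS} with $\bar\mu=0$) with the scalar equation \eqref{EEL} for $w$: periodic solutions correspond to periodic solutions of the same period, and the endpoint densities $\rho\in\{\rho_g^m,\rho_\ell^m\}$ correspond to $w\in\{-1,+1\}$ in that order. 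In particular the $1$-periodic density $\rho^1_{\varepsilon/\omega^2}$ of Theorem~\ref{TEx} equals $a\,w^1_{\varepsilon/\omega^2}+b$, where $w^1_{\varepsilon/\omega^2}$ is the $1$-periodic profile of Lemma~\ref{LMM}.

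First I would record the purely algebraic fact that, under the period rescaling $x\mapsto x/\omega$, $\varepsilon\mapsto\varepsilon/\omega^2$ of Lemma~\ref{LMM}, the recentered density profile $\rho_\varepsilon^\omega$ of the statement is the image of the recentered profile $w_\varepsilon^\omega$ of Lemma~\ref{LPL} under $\rho=aw+b$; that is $\rho_\varepsilon^\omega=a\,w_\varepsilon^\omega+b$ with the same translation $\delta_\omega$. Indeed, substituting $\rho^1_{\varepsilon/\omega^2}=a\,w^1_{\varepsilon/\omega^2}+b$ into the definition of $\rho_\varepsilon^\omega$ and comparing with $w_\varepsilon^\omega(x)=w^1_{\varepsilon/\omega^2}\bigl((x+\delta_\omega)/\omega\bigr)$ gives the claim. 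Crucially, the hypothesis that $W=\Psi^m-\ell_m$ has a single critical point $w_*\in(-1,1)$ with $W''(w_*)<0$ is exactly the standing assumption of Lemma~\ref{LPL}, so that lemma applies without change.

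Next I would invoke Lemma~\ref{LPL}: with $\delta_\omega$ chosen so that $w_\varepsilon^\omega(0)$ equals the ($\omega$-independent) prescribed average $a$, the profiles $w_\varepsilon^\omega$ converge fully, locally uniformly together with their first derivatives on $\mathbb{R}$, to the heteroclinic solution $w_\varepsilon^\infty$ of \eqref{EWh} with $w(\pm\infty)=\pm1$. Applying the continuous affine map $\rho=aw+b$ transfers this to $\rho_\varepsilon^\omega\to\rho_\varepsilon^\infty:=a\,w_\varepsilon^\infty+b$ in the same locally uniform $C^1$ sense. Since $a>0$ and $w=\mp1$ correspond to $\rho=\rho_g^m,\rho_\ell^m$, the limit $\rho_\varepsilon^\infty$ is non-decreasing, satisfies $\rho_\varepsilon^\infty(-\infty)=\rho_g^m$ and $\rho_\varepsilon^\infty(+\infty)=\rho_\ell^m$, and solves \eqref{EMS}--\eqref{EMMS} on $\mathbb{R}$ with $\bar\mu=0$. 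By the uniqueness up to translation in Proposition~\ref{PMon} (equivalently Theorem~\ref{TMon}), this $\rho_\varepsilon^\infty$ is precisely the monotone traveling-wave density of Theorem~\ref{TMon}, which is what we had to show. When $m\neq0$ the velocity statement follows immediately from $u=m/\rho$ together with $\rho_\varepsilon^\infty\ge\rho_g^m>0$.

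I do not expect a genuine obstacle here: the theorem is an immediate corollary of Lemma~\ref{LPL}, and all the analytic content --- the compactness, the vanishing of the Lagrange multiplier $\lambda_{\varepsilon,\omega}$ as $\omega\to\infty$, the phase-plane argument excluding the constant limits $\pm1$ and $w_*$, and the choice of $\delta_\omega$ yielding full convergence --- is already carried out there. The only matter requiring care is bookkeeping the two normalizations, namely the period rescaling $x\mapsto x/\omega$ (with $\varepsilon\mapsto\varepsilon/\omega^2$) and the recentering shift $\delta_\omega$, so that the profile $\rho_\varepsilon^\omega$ as written in the statement coincides with $a\,w_\varepsilon^\omega+b$ under the affine correspondence, and verifying that this affine map preserves both the monotonicity and the prescribed endpoint values so that the limit is identified with the unique monotone solution of Theorem~\ref{TMon}.
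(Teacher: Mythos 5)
Your proposal is correct and follows exactly the paper's route: the paper derives Theorem \ref{TCMon} as an immediate consequence of Lemma \ref{LPL} via the affine correspondence $\rho = aw+b$ from the proof of Theorem \ref{TEx}, which is precisely your argument (you merely spell out the rescaling/translation bookkeeping and the identification of the limit via uniqueness, which the paper leaves implicit).
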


\section{Numerical experiments} \label{SNE}

In this section, we present numerical experiments for the initial value problem of the one-dimensional isothermal Navier-Stokes-Korteweg equations
\begin{empheq}[left={\empheqlbrace}]{alignat=2} 
  & \partial_t \rho + \partial_x (\rho u) = 0 \quad & & \text{in} \quad \mathbb{T} \times [0,T], \label{IVP1} \\
  & \partial_t u + u \partial_x u + \rho^{-1} \bar{\mu} \partial_x^2 u + \partial_\rho^2 \Psi \partial_x \rho - \varepsilon \partial_x^3 \rho = 0 \quad & & \text{in} \quad \mathbb{T} \times [0,T], \label{IVP2} \\
  & u(x,0) = u_0(x), \quad \rho(x,0) = \rho_0 (x)  \quad & & \text{in} \quad \mathbb{T}, \label{IVP3}
\end{empheq}
where the Helmholtz free energy (available energy) is given by $ \Psi(\rho) = \frac{1}{4} (\rho - 1)^2 (\rho - 2)^2 $, $ \bar{\mu} $ is a positive constant, and $ \varepsilon $ is a small positive parameter. 
First two equations are equivalent with \eqref{EM1} and \eqref{EMME1}, using the relation $ m = \rho u $. Hereafter, we assume that the period $ \omega $ introduced in Section \ref{SNP} is equal to $ 1 $. We conducted numerical experiments with the method detailed in Appendix with the spatial mesh size $ h = 1 / 300 $ and the temporal step size $ \Delta t = 1 / 120000 $. 
Over time, the numerical solutions evolve into a traveling wave-like state, as illustrated in Figures \ref{fig:mu1eps4}--\ref{fig:mu3eps5}. 
We define the position of one of the interfaces at time $ t $, denoted by $ \bar{x}_h(t) $, as the point satisfying $ \rho_h(\bar{x}_h(t), t) = 0 $ and $ \partial_x \rho_h(\bar{x}_h(t), t) > 0 $. The velocity of the interface is defined by $ c_h(t) = \frac{d}{dt} \bar{x}_h(t) $. Figures \ref{fig:mu1eps4}, \ref{fig:mu3eps4}, \ref{fig:mu1eps5} and \ref{fig:mu3eps5} show the density $ \rho_h(x, t) $, the velocity $ u_h(x, t) $, the interface velocity $ c_h(t) $ and the mass flux $ \rho_h(x, t) (u_h(x, t) - c_h(t)) $ at $ t = 25 $ for the parameters $ (\bar{\mu}, \varepsilon) = (10^{-1}, 10^{-4}) , \ (10^{-3}, 10^{-4}) , \ (10^{-1}, 10^{-5}) $ and $ (10^{-3}, 10^{-5}) $ respectively. 

In all four results, the density splits into two phases. We can interpret that the domain where $ \rho \approx 1 $ as the vapor phase and the domain where $ \rho \approx 2 $ as the liquid phase. For $ \bar{\mu} = 10^{-1} $ (Figures \ref{fig:mu1eps4} and \ref{fig:mu1eps5}), the velocity $ u $ is nearly equal to the interface velocity in both phases, indicating almost no mass transfer between the two phases. For $ \bar{\mu} = 10^{-3} $ (Figures \ref{fig:mu3eps4} and \ref{fig:mu3eps5}), two domains with different velocities appear, as well as the density. The mass flux $ \rho (u - c) $ is approximately constant and positive throughout the domain, which implies there exists phase flux and the total mass is nearly conserved through the exchange of mass between the phases. 

The result of $ \bar{\mu} = 10^{-1} $, with an almost constant velocity, is consistent with Theorem \ref{TNon}. It seems that the equilibrium states with a non-constant velocity correspond to the non-trivial periodic solution of the Euler-Korteweg equations mentioned in Theorem \ref{TEx}. 

In the following, we demonstrate that the numerical solutions obtained above approximate the periodic traveling wave solution to the Euler-Korteweg equations mentioned in Section \ref{SE}. Let $ (\rho^{\text{EK}}, u^\text{EK}) $ be an arbitrary $ 1 $-periodic solution to the Euler-Korteweg equations, $ \rho_{l}^\text{EK} = \max_{x \in \mathbb{T}} \rho^\text{EK}(x) $, and $ \rho_{g}^\text{EK} = \min_{x \in \mathbb{T}} \rho^\text{EK}(x) $. The traveling speed of this solution is denoted by $ c^\text{EK} $, and its mass flux is given by $ m^\text{EK} =  \rho^{\text{EK}}(u^\text{EK} - c^\text{EK}) $. According to the proof of Theorem \ref{SE}, the tangent lines of $ \Psi^m = \Psi - \frac{m^2}{2\rho} $ at $ \rho_h $ and $ \rho_l $ are identical. 

Let  $ \rho_{h,l} = \max_{x \in \mathbb{T}} \rho_h $, $ \rho_{h,g} = \min_{x \in \mathbb{T}} $, $ \max_{x \in \mathbb{T}} \rho_h = \rho(x_l) $, $ \min_{x \in \mathbb{T}} \rho_h = \rho(x_g) $, $ u_g = u(x_g) $ and $ u_l = u(x_l) $. 
Figure \ref{fig:Psi} shows $ \Psi^m $ and its tangent lines at $ \rho_{h,l}  $ and $ \rho_{h,g} $. 
The two tangent lines approximately match in all four figures, therefore we conclude that the densities and velocities of the liquid and gas phases implied by the numerical solution are close to those of traveling wave solutions of the Euler-Korteweg equations. 

\begin{figure}[tbhp] 
  \centering
  \includegraphics[width=0.5\textwidth]{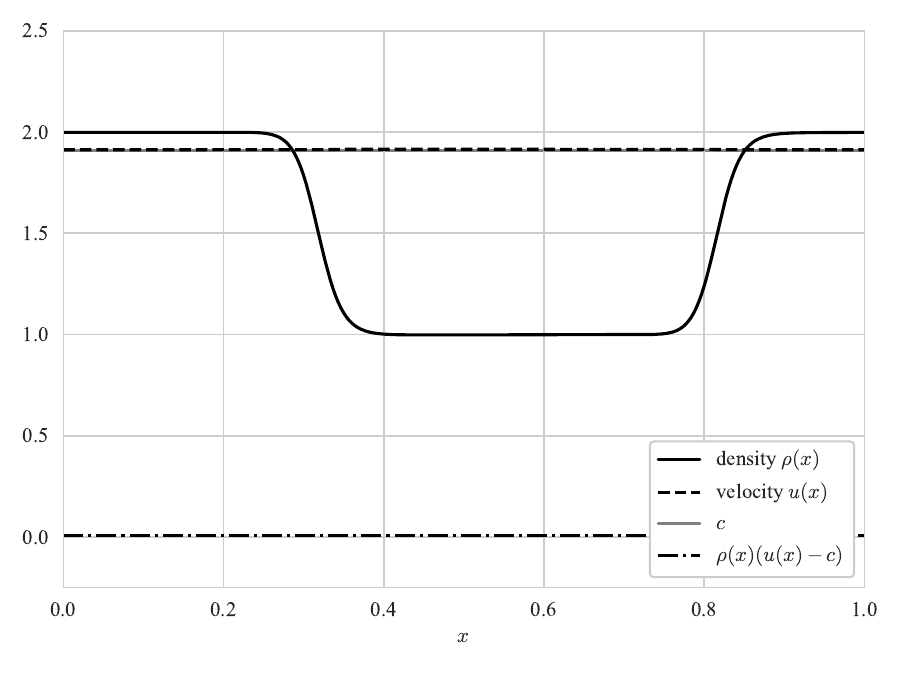}
  \caption{Graph of the density $ \rho(x, t) $, velocity $ u(x, t) $, interface velocity $ c(t) $, and mass flux $ \rho(x, t)(u(x, t) - c(t)) $ at time $ t = 25 $. The parameters are set to $ \bar{\mu} = 10^{-1} $ and $ \varepsilon = 10^{-4} $. }
  \label{fig:mu1eps4}
\end{figure}

\begin{figure}[tbhp]
  \centering
  \includegraphics[width=0.5\textwidth]{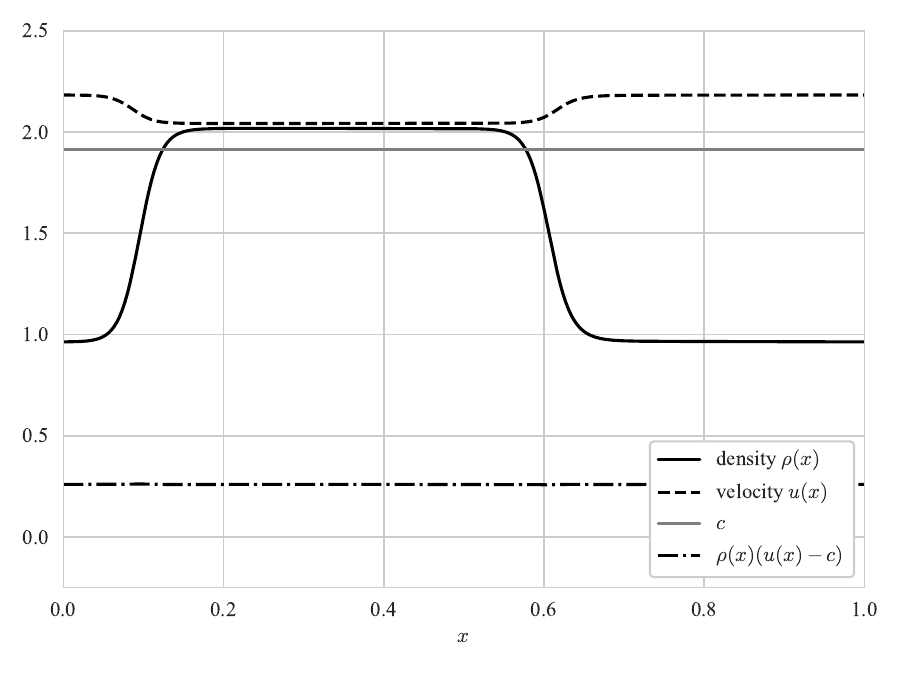}
  \caption{$ \rho(x, t) $, $ u(x, t) $, $ c(t) $, and $ \rho(x, t)(u(x, t) - c(t)) $ at $ t = 25 $. The parameters are set to $ \bar{\mu} = 10^{-3} $ and $ \varepsilon = 10^{-4} $. }
  \label{fig:mu3eps4}
\end{figure}

\begin{figure}[tbhp]
  \centering
  \includegraphics[width=0.5\textwidth]{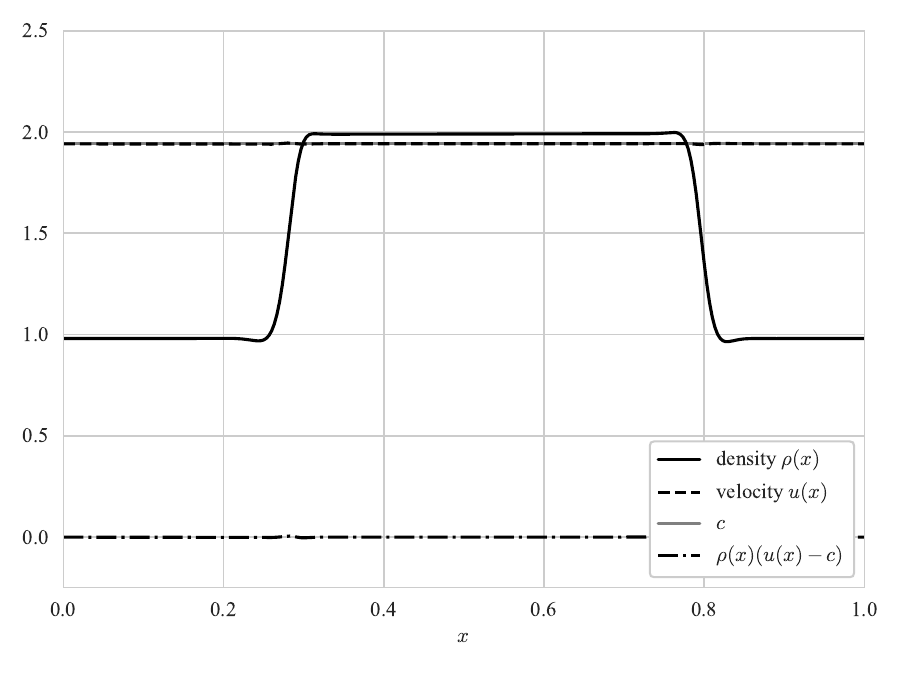}
  \caption{$ \rho(x, t) $, $ u(x, t) $, $ c(t) $, and $ \rho(x, t)(u(x, t) - c(t)) $ at $ t = 25 $. The parameters are set to $ \bar{\mu} = 10^{-1} $ and $ \varepsilon = 10^{-5} $. }
  \label{fig:mu1eps5}
\end{figure}

\begin{figure}[tbhp]
  \centering
  \includegraphics[width=0.5\textwidth]{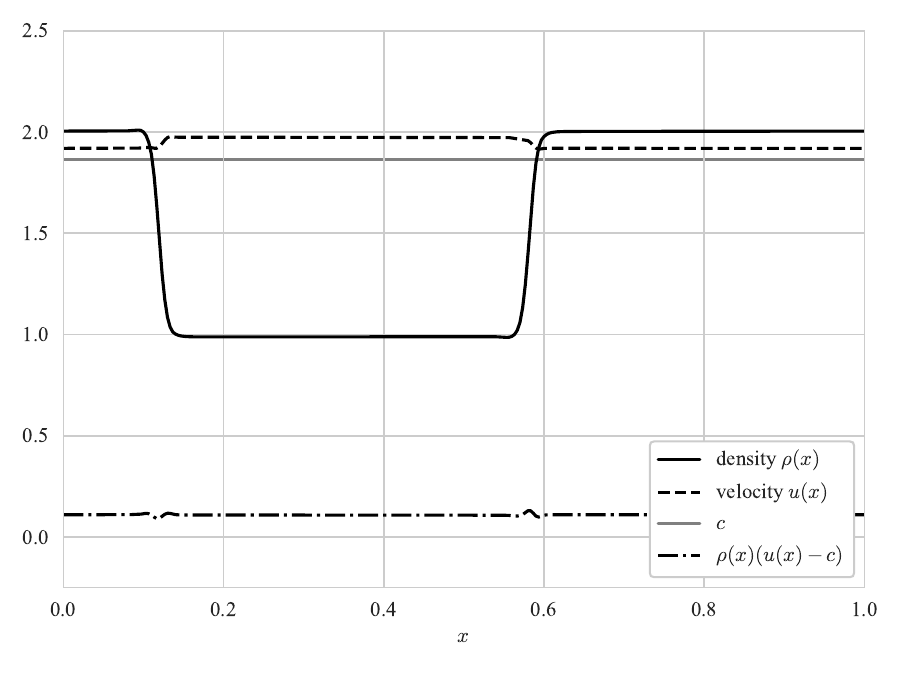}
  \caption{$ \rho(x, t) $, $ u(x, t) $, $ c(t) $, and $ \rho(x, t)(u(x, t) - c(t)) $ at $ t = 25 $. The parameters are set to $ \bar{\mu} = 10^{-3} $ and $ \varepsilon = 10^{-5} $. }
  \label{fig:mu3eps5}
\end{figure}
\begin{figure}[tbhp]
  \centering
  \begin{minipage}{0.4\columnwidth}
    \centering
    \includegraphics[width=\columnwidth]{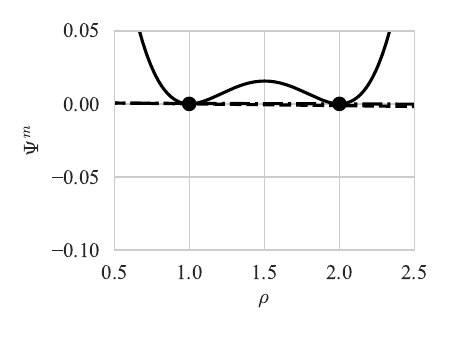}
    \vspace{-24pt}
    \subcaption{$ \bar{\mu} = 10^{-1} $,  $ \varepsilon = 10^{-4} $. }
    \label{fig:Psi_mu1eps4}
  \end{minipage}
  \begin{minipage}{0.4\columnwidth}
    \centering
    \includegraphics[width=\columnwidth]{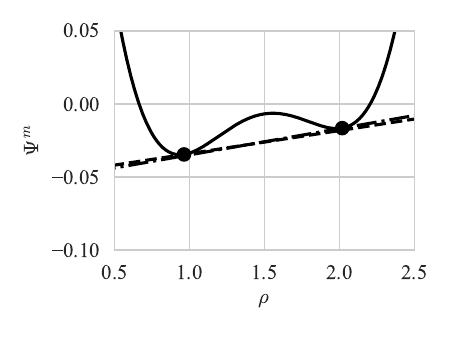}
    \vspace{-24pt}
    \subcaption{$ \bar{\mu} = 10^{-3} $,  $ \varepsilon = 10^{-4} $. } 
    \label{fig:Psi_mu3eps4} 
  \end{minipage}
  \centering
  \begin{minipage}{0.4\columnwidth}
    \centering
    \includegraphics[width=\columnwidth]{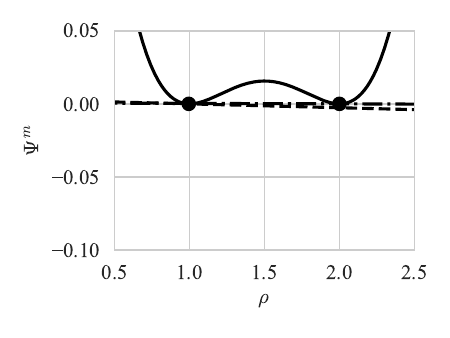}
    \vspace{-24pt}
    \subcaption{$ \bar{\mu} = 10^{-1} $,  $ \varepsilon = 10^{-5} $. }
    \label{fig:Psi_mu1eps5}
  \end{minipage}
  \begin{minipage}{0.4\columnwidth}
    \centering
    \includegraphics[width=\columnwidth]{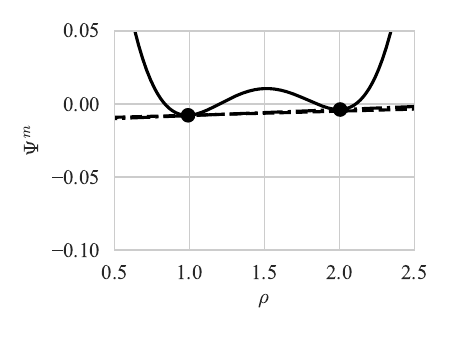}
    \vspace{-24pt}
    \subcaption{$ \bar{\mu} = 10^{-3} $,  $ \varepsilon = 10^{-5} $. }
    \label{fig:Psi_mu3eps5}
  \end{minipage}
  \caption{$ \Psi^m $ and its tangent line at $ \min_{x \in \mathbb{T}} \rho(x, t) $ and $ \max_{x \in \mathbb{T}} \rho(x, t) $ for $ t = 25 $. } 
  \label{fig:Psi} 
\end{figure} 

Lastly, we validated our method by testing it against an explicitly given exact solution. 
Let $ k \in (0,1) $ be determined by $ \varepsilon = \frac{1}{64 (1 + k^2) (K(k))^2} $ and 
\begin{align}
  \tilde{\rho} (x) = \frac{3}{2} + \frac{1}{2} \left(\frac{2k^2}{1 + k^2}\right)^{1/2} \sn \left( \frac{4 K(k)}{\pi} x, k \right), 
\end{align}
where $ \sn $ represents the Jacobi elliptic function and $ K(k) = \int_{0}^{\pi / 2} \frac{d\theta}{\sqrt{1 - k^2 \sin^2 \theta}} $ is the complete elliptic integral of the first kind. For any constant $ \bar{u} $, $ (\rho(x,t), u(x,t)) = (\tilde{\rho}(x - \bar{u} t), \bar{u}) $ is an exact solution to the initial-value problem \eqref{IVP1}--\eqref{IVP3} with initial data $ (\rho_0, u_0) = (\tilde{\rho}, \bar{u}) $. 
We set $ \bar{u} = 2 $, and the numerical approximation is shown in Figure \ref{fig:exact}.
The numerical solution closely matches the exact solution, which confirms the accuracy of the numerical method. 

\begin{figure}[tbhp]
    \centering
    \includegraphics[width=0.5\textwidth]{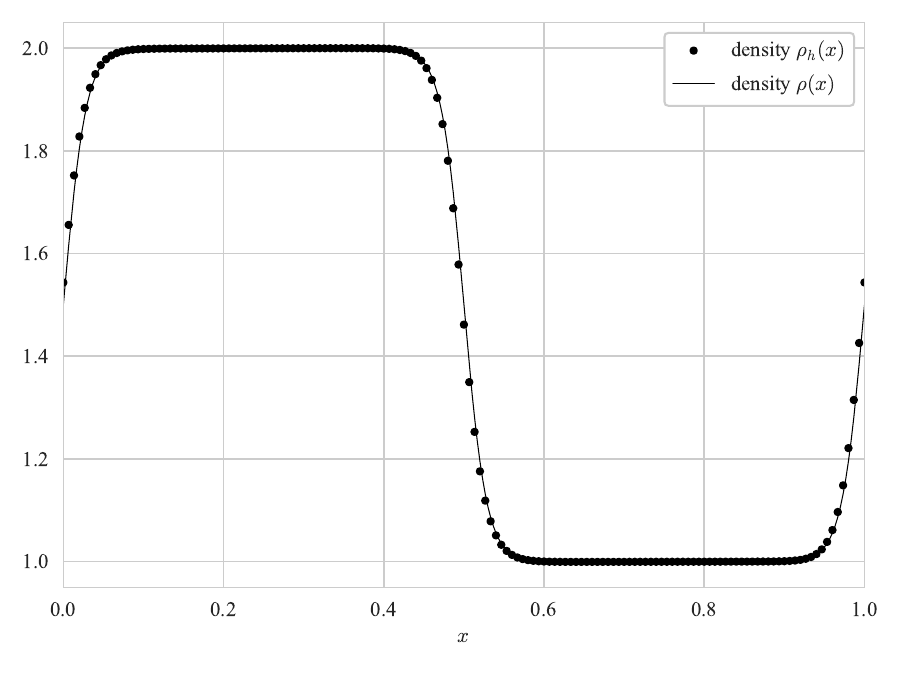}
    \caption{The numerical solution $ \rho_h $ and the exact solution $ \tilde{\rho} $ at $ t = 1 $. Here we set $ \bar{\mu} = 0.1 $ and $ \varepsilon = 10^{-4} $. }
    \label{fig:exact}
\end{figure}

\section{Appendix: numerical scheme} \label{SApp}

In this section, we describe the numerical scheme used in Section \ref{SNE} for solving the one-dimensional isothermal Navier-Stokes-Korteweg equations. 
We decompose the original problem \eqref{IVP1} and \eqref{IVP2} into the following three subproblems: 

Subproblem 1: advection equation for density. 
\begin{equation}
    \partial_t \rho + \partial_x (u \rho) = 0, 
\end{equation}

Subproblem 2: advection equation for velocity. 
\begin{equation} 
    \partial_t u + u \partial_x u = 0, 
\end{equation} 

Subproblem 3: other terms in \eqref{IVP2}. 
\begin{equation}
    \partial_t u + \rho^{-1} \bar{\mu} \partial_x^2 u + \partial_\rho^2 \Psi \partial_x \rho - \varepsilon \partial_x^3 \rho = 0. 
\end{equation}

The subproblems are solved independently using suitable numerical approaches and are alternately treated according to the Strang splitting method, as described below. 

Let $ \rho_h^0 = I_h^3 \rho_0 $ and $ u_h^0 = I_h^3 u_0 $. Suppose that we have already obtained $ \rho_h^{n-1} $ and $ u_h^{n-1} $. Firstly, solve Subproblems 1 and 2 by the CIP method in $ [t^{n-1}, t^{n-1/2}] $. Let $ N_x \in \mathbb{N} $, $ h = 1 / N_x $ and $ x_j = jh $ for $ j = 0, \ldots, N_x $. 
We set the finite-dimensional function space $ V_h $ by 
\begin{align}
  V_h = \left\{v \in C^1(\mathbb{T}) \mathrel{}\middle|\mathrel{} v|_{[x_j,x_{j+1}]} \in \mathbb{P}_3([x_j,x_{j+1}]), \quad j = 0, \dots, M-1 \right\}. 
\end{align}
Here $\mathbb{P}_3$ denotes the space of cubic polynomials.
 Furthermore, let $ X[w](\cdot; \cdot, \cdot): [0,T] \times \mathbb{T} \times [0,T] \to \mathbb{R} $ represent the characteristic curve associated with a velocity $ w: \mathbb{T} \to \mathbb{R} $, that is, $ X(s; x, t) $ denotes the solution of the ordinary differential equation: 
\begin{empheq}[left={\empheqlbrace}]{alignat=2}
    & \frac{dX[w](s; x, t)}{ds} = w \left(X[w](s; x, t), s\right), & \quad & \text{in} \quad \mathbb{T} \times [t^{n-1}, t^{n-1/2}], \\
    & X[w](t; x, t) = x, & \quad & \text{in} \quad \mathbb{T}. 
\end{empheq}
The approximation of $ X[w](t^{n-1}; \cdot, t^{n-1/2}) $, denoted by $ X_h[w]^{n-1} $, is computed using the fourth-order Runge-Kutta method. The density and velocity are updated as follows: 
\begin{gather} 
    \rho_h^{n+1/2} = I_h^3 \left(\partial_x X_h[u_h^{n-1}] \cdot \rho_h^{n-1} \circ X_h[u_h^{n-1}]\right), \quad n = 1, \ldots, N, 
\end{gather} 
and
\begin{gather}
    u_h^{n+1/2,\ast} = I_h^3 \left(u_h^{n-1} \circ X_h[u_h^{n-1}]\right), \quad n = 1, \ldots, N,
\end{gather}
where $ I_h^3 $ denotes the cubic Hermite interpolation operator. 

We solve Subproblem 3 using an explicit finite difference scheme. 
Let $ \rho_j^{n,\ast} = \rho_h^{n,\ast}(x_j) $, $ \rho_{x,j}^{n,\ast} = \partial_x \rho_h^{n,\ast}(x_j) $, $ u_j^{n,\ast} = u_h^{n,\ast}(x_j) $ and $ u_{x,j}^{n,\ast} = \partial_x u_h^{n,\ast}(x_j) $ for $ j = 1, \ldots, N_x $. 
We compute the updates for the approximations of $ u $ and $ u_x $ as follows: 
\begin{gather}
  u_{j}^{n+1/2} = u_{j}^{n+1/2,\ast} + \Delta t \left(\frac{\bar{\nu} D_{h,j}^2 u^{n+1/2,\ast}}{\rho_j^{n+1/2}} - \frac{\rho_{x,j}^{n+1/2} \partial_\rho \Psi (\rho_j^{n+1/2})}{\rho_j^{n+1/2}} + \varepsilon D_{h,j}^3 \rho^{n+1/2} \right), \\
  \begin{aligned} 
    & u_{x,j}^{n+1/2} \\
		 & = u_{x,j}^{n+1/2,\ast} + \Delta t \left(\bar{\nu}\left(\frac{ D_{h,j}^3 u^{n+1/2,\ast}}{\rho_j^{n+1/2}} - \frac{\rho_{x,j}^{n+1/2} D_{h,j}^3 u^{n+1/2,\ast}}{(\rho_j^{n+1/2})^2}\right) - \frac{D_{h,j}^2 \rho^{n+1/2} \partial_\rho \Psi (\rho_j^{n+1/2})}{\rho_j^{n+1/2}} \right. \\
    & \quad \left. - \frac{(\rho_{x,j}^{n+1/2})^2 \partial_\rho^2 \Psi (\rho_j^{n+1/2})}{\rho_j^{n+1/2}} + \frac{(\rho_{x,j}^{n+1/2})^2\partial_\rho \Psi (\rho_j^{n+1/2})}{(\rho_j^{n+1/2})^2} + \varepsilon D_{h,j}^4 \rho^{n+1/2}\right), 
  \end{aligned}
\end{gather}
where the derivatives are approximated as 
\begin{gather}
  \begin{aligned}
     v_{xx} & \approx \frac{1}{h^2} \left[ \frac{128}{27} \left( \frac{v_{j+1} + v_{j-1}}{2} - v_j \right)+ \frac{7}{27} \left( \frac{v_{j+2} + v_{j-2}}{2} - v_j \right) \right. \\
    & \quad \left. - \frac{8h}{9} \left(v_{x,j+1} - v_{x,j-1} \right) - \frac{h}{36} \left( v_{x,j+2} - v_{x,j-2} \right) \right] \eqqcolon D_{h,j}^2 v, 
  \end{aligned} \\
  \begin{aligned}
    v_{xxx} & \approx \frac{1}{h^3} \left[\frac{176}{9} \left( \frac{v_{j+1} - v_{j-1}}{2} - h v_{x,j} \right) + \frac{31}{72} \left( \frac{v_{j+2} - v_{j-2}}{2} - 2h v_{x,j} \right) \right. \\
    & \quad \left. - \frac{16h}{3} \left( \frac{v_{x,j+1} + v_{x,j-1}}{2} - v_{x,j} \right) - \frac{h}{12} \left( \frac{v_{x,j+2} + v_{x,j-2}}{2} - v_{x,j} \right) \right] \\
		& \eqqcolon D_{h,j}^3 v, 
  \end{aligned}
\end{gather}
and
\begin{align}
  v_{xxxx} & \approx \frac{1}{h^4} \left[- \frac{128}{3} \left( \frac{v_{j+1} + v_{j-1}}{2} - h v_{j} \right) - \frac{41}{3} \left( \frac{v_{j+2} + v_{j-2}}{2} - 2h v_{j} \right) \right. \\
  & \quad \left. + 16 h \left( v_{x,j+1} - v_{x,j-1} \right) + \frac{3 h}{4} \left( v_{x,j+2} - v_{x,j-2} \right) \right] \eqqcolon D_{h,j}^4 v 
\end{align}
Here, $ v_j = v_h(x_j) $ and $ v_x = \partial_x v_h(x_j) $ for $ v = \rho_h^{n+1/2} $ or $ u_h^{n+1/2,\ast} $. Then we determine $ u_h^{n+1/2} \in V_h $ by $ u_h^{n+1/2}(x_j) = u_j^{n+1/2} $ and $ \partial_x u_h^{n+1/2}(x_j) = u_{x,j}^{n+1/2} $ for $ j = 1, \ldots, N_x $. 
Finally we solve Subproblems 1 and 2 in $ [t^{n-1/2}, t^n] $ again by the CIP method as 
\begin{gather} 
  \rho_h^{n} = I_h^3 (\partial_x X_h[u_h^{n-1/2}] \cdot \rho_h^{n-1} \circ X_h[u_h^{n-1/2}]), \\ 
  u_h^{n} = I_h^3 (u_h^{n-1} \circ X_h[u_h^{n-1/2}]). 
\end{gather}


\section*{Acknowledgements}
This work was done as a part of research activities of Social Cooperation Program ``Mathematical Science for Refrigerant Thermal Fluids'' sponsored by Daikin Industries, Ltd.\ at the University of Tokyo.
 The authors are grateful to members of the Technology and Innovation Center of Daikin Industries, Ltd.\ for showing several interesting phenomena related to phase transition with fruitful discussion which triggered this work.
 The work of the first author was partly supported by the Japan Society for the Promotion of Science (JSPS) through the grants KAKENHI: JP24H00183, JP24K00531 and by Arithmer Inc., Daikin Industries, Ltd.\ and Ebara Corporation through collaborative grants.
 The work of the second author was partly supported by JSPS through the grants KAKENHI: JP24K06860. 
 The work of the third author was supported by JSPS through the grants KAKENHI: JP24KJ0964.

\end{document}